\newtheorem{lem}{Lemma}[section]
\newtheorem{thm}[lem]{Theorem}
\newtheorem{pro}[lem]{Proposition}
\newtheorem{exa}[lem]{Example}
\numberwithin{equation}{section}
\newcommand{\ZZ}{{\mathbb{Z}}}
\newcommand{\pet}{{\mathrm{pet}}}
\newcommand{\height}{{\mathrm{ht}}}
\newcommand{\trans}{{\mathrm{c}}}
\newcommand{\core}{{\mathrm{core}}}
\newcommand{\ninv}{{\mathrm{ninv}}}
\newcommand{\G}{{\mathcal{G}}}
\title[Petrie Symmetric Functions]{On Signed Multiplicities of Schur Expansions\\
Surrounding Petrie Symmetric Functions}
\author[Y.-J. Cheng]{Yen-Jen Cheng}
\address{Department of Applied Mathematics,  National Yang Ming Chiao Tung University, Hsinchu 300093, Taiwan, ROC}
\email{yjc7755@gmail.com}
\author[M.-C. Chou]{Meng-Chien Chou}
\address{Department of Mathematics, National Taiwan Normal University, Taipei 116325, Taiwan, ROC}
\email{aoliver466@gmail.com}
\author[S.-P. Eu]{Sen-Peng Eu}
\address{Department of Mathematics, National Taiwan Normal University, Taipei 116325, and Chinese Air Force Academy, Kaohsiung 820009, Taiwan, ROC}
\email{speu@math.ntnu.edu.tw}
\author[T.-S. Fu]{Tung-Shan Fu}
\address{Department of Applied Mathematics, National Pingtung University, Pingtung 900391, Taiwan, ROC}
\email{tsfu@mail.nptu.edu.tw}
\author[J.-C. Yao]{Jyun-Cheng Yao}
\address{Department of Mathematics, National Taiwan Normal University, Taipei 116325, Taiwan, ROC}
\email{0955526287er@gmail.com}
\begin{document}

\subjclass[2020]{05E05, 05A17}
\keywords{Petrie symmetric functions, 
truncated homogeneous symmetric functions, 
modular complete symmetric functions, signed multiplicity free}

\begin{abstract} For $k\ge 1$, the homogeneous symmetric functions $G(k,m)$ of degree $m$ defined by $\sum_{m\ge 0} G(k,m) z^m=\prod_{i\ge 1} \big(1+x_iz+x^2_iz^2+\cdots+x^{k-1}_iz^{k-1}\big)$ are called \emph{Petrie symmetric functions}. As derived by Grinberg and Fu--Mei independently, the expansion of $G(k,m)$ in the basis of Schur functions $s_{\lambda}$ turns out to be signed multiplicity free,  i.e., the coefficients are $-1$, $0$ and $1$. In this paper we give a combinatorial interpretation of the coefficient of $s_{\lambda}$ in terms of the $k$-core of $\lambda$ and a sequence of rim hooks of size $k$ removed from $\lambda$. We further study the product of $G(k,m)$ with a power sum symmetric function $p_n$. For all $n\ge 1$, we give necessary and sufficient conditions on the parameters $k$ and $m$ in order for the expansion of $G(k,m)\cdot p_n$ in the basis of Schur functions to be signed multiplicity free. This settles affirmatively a conjecture of Alexandersson as the special case $n=2$.
\end{abstract}

\maketitle

\section{Introduction} 

\subsection{Preliminaries and background}
A \emph{partition} $\lambda$ of $m$ is a sequence $(\lambda_1,\lambda_2,\dots,\lambda_{\ell})$ of integers such that $\lambda_1\ge\lambda_2\ge\cdots\ge\lambda_{\ell}>0$ and $\lambda_1+\lambda_2+\cdots+\lambda_{\ell}=m$. Each entry $\lambda_i$ is a \emph{part} of $\lambda$. We write $\lambda\vdash m$, $|\lambda|=m$ (\emph{size} of $\lambda$) and $\ell(\lambda)=\ell$ (\emph{length} of $\lambda$). Sometimes we write $(\lambda^{d_1}_1,\lambda^{d_2}_2,\cdots)$ if there are $d_1$ parts of $\lambda_1$ and $d_2$ parts of $\lambda_2<\lambda_1$ etc.  The \emph{conjugate} of $\lambda$, denoted $\lambda^{\trans}$, is the partition $(\lambda^{\trans}_1,\lambda^{\trans}_2,\dots,\lambda^{\trans}_{\lambda_1})$ defined by $\lambda^{\trans}_i=\#\{j: \lambda_j \ge i\}$. The \emph{Young diagram} $[\lambda]$ of a partition $\lambda$ is a left-justified array of cells with $\lambda_i$ cells in the $i$th row.
For partitions $\lambda,\mu$ we say that $\mu\subset \lambda$ if $\mu_j\le \lambda_j$ for all $j$. The diagram $[\lambda/\mu]$ of a \emph{skew shape} $\lambda/\mu$ is the array obtained by removing $[\mu]$ from the north-west corner of $[\lambda]$, where $\mu\subset \lambda$. We say that $\lambda/\mu$ is a \emph{rim hook} (or \emph{border strip} or \emph{ribbon}) if $[\lambda/\mu]$ is a connected diagram with no $2\times 2$ square. The \emph{height} $\height(\lambda/\mu)$ of a rim hook is defined to be one less than the number of rows of $[\lambda/\mu]$. Let $\core_k(\lambda)$ denote the $k$-\emph{core} obtained from $\lambda$ by successively removing all rim hooks of size $k$. We make use of the $\chi$-notation that maps each condition $C$ onto $\{0, 1\}$, defined as $\chi(C)=1$ if $C$ is true, and 0 otherwise. For a positive integer $k$ and a partition $\lambda$, the $k$-\emph{Petrie number} $\pet_k(\lambda)$ of $\lambda$ is the integer defined by
\begin{equation} \label{eqn:k-Petrie}
\pet_k(\lambda):=\det\big[\chi(0\le\lambda_i-i+j<k  )\big]_{i,j=1}^{\ell(\lambda)}.
\end{equation}
By a classical result of Gordon--Wilkinson \cite{GW}, the $k$-Petrie numbers have only three possibilities $\{-1,0,1\}$. For example, if $\lambda=(3,3,1)$ then
\[
\pet_3(\lambda)=\det
\begin{bmatrix}
       0  &   0  &  0  \\
       1  &   0  &  0  \\
       0  &   1  &  1  \\
\end{bmatrix}=0 \qquad\mbox{and}\qquad
\pet_4(\lambda)=\det
\begin{bmatrix}
       1  &   0  &  0  \\
       1  &   1  &  0  \\
       0  &   1  &  1  \\
\end{bmatrix}=1.
\]

Recently, Grinberg \cite{Grinberg} and Fu--Mei \cite{FM} studied independently the homogeneous symmetric functions $G(k,m)$ of degree $m$ defined by
\begin{equation}
\sum_{m\ge 0} G(k,m) z^m=\prod_{i\ge 1} \big(1+x_iz+x^2_iz^2+\cdots+x^{k-1}_iz^{k-1}\big).
\end{equation}
Following \cite{Grinberg}, the symmetric functions $G(k,m)$ are called \emph{Petrie symmetric functions} (or \emph{truncated homogeneous symmetric functions} in \cite{FM}). First appeared in \cite{Doty-Walker}, they are also known as \emph{modular complete symmetric functions}. Expressed in terms of monomial symmetric functions, the Petrie symmetric functions can be written as
\begin{equation} \label{eqn:monomial}
G(k,m)=\sum_{\substack{\lambda\,\vdash\, m; \\ \lambda_i<k\mbox{ \footnotesize for all $i$}}} m_{\lambda}.
\end{equation}
In particular, $G(2,m)=e_m$ and $G(k,m)=h_m$ if $k>m$. Grinberg \cite{Grinberg} found that the coefficients of the expansion of $G(k,m)$ in the basis of Schur functions turn out to be $k$-Petrie numbers (see Theorem \ref{thm:k-Petrie-Schur}) and gave an explicit description for the coefficients (see Theorem \ref{thm:Grinberg}).

\begin{thm}[Grinberg] \label{thm:k-Petrie-Schur} For $k\ge1$ and $m\ge 0$, we have
\begin{equation} \label{eqn:coefficient}
G(k,m)=\sum_{\lambda\,\vdash\, m} \pet_k(\lambda) s_{\lambda}.
\end{equation}
\end{thm}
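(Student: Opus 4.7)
The plan is to derive Theorem~\ref{thm:k-Petrie-Schur} from the generating function
\[
\sum_{m \ge 0} G(k,m) z^m = \prod_{i \ge 1}\bigl(1 + x_i z + \cdots + x_i^{k-1} z^{k-1}\bigr)
\]
by factoring the single-variable polynomial $1 + z + \cdots + z^{k-1}$ over $\mathbb{C}$ and then invoking two classical symmetric-function identities: the dual Cauchy identity and the dual (elementary) Jacobi--Trudi formula.

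First I would write $1 + z + \cdots + z^{k-1} = \prod_{j=1}^{k-1}(1 - \zeta^j z)$, where $\zeta = e^{2\pi i /k}$ is a primitive $k$-th root of unity. Substituting $z \mapsto x_i z$ and taking the product over $i$ rewrites the generating function as $\prod_{i,j}\bigl(1 + (-\zeta^j)\cdot x_i z\bigr)$. Applying the dual Cauchy identity $\prod_{i,j}(1 + x_i y_j) = \sum_{\lambda} s_\lambda(x)\, s_{\lambda^{\trans}}(y)$ to the finite alphabet $(y_1, \ldots, y_{k-1}) = (-\zeta, -\zeta^2, \ldots, -\zeta^{k-1})$ then gives
\[
\sum_{m \ge 0} G(k,m) z^m = \sum_{\lambda} s_\lambda(x) \, s_{\lambda^{\trans}}(-\zeta, -\zeta^2, \ldots, -\zeta^{k-1}) \, z^{|\lambda|},
\]
so the theorem will follow once I show that $s_{\lambda^{\trans}}(-\zeta, \ldots, -\zeta^{k-1}) = \pet_k(\lambda)$ for every partition $\lambda$.

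This final identification is the heart of the argument. By the dual Jacobi--Trudi formula, $s_{\lambda^{\trans}} = \det[e_{\lambda_i - i + j}]_{i,j=1}^{\ell(\lambda)}$ (with the convention $e_n = 0$ for $n < 0$), so it suffices to evaluate the elementary symmetric functions $e_n$ at the alphabet $(-\zeta, \ldots, -\zeta^{k-1})$. Their generating series is
\[
\sum_{n \ge 0} e_n(-\zeta, \ldots, -\zeta^{k-1})\, y^n = \prod_{j=1}^{k-1} (1 - \zeta^j y) = 1 + y + y^2 + \cdots + y^{k-1},
\]
which shows $e_n(-\zeta, \ldots, -\zeta^{k-1}) = \chi(0 \le n < k)$. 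Plugging this into the dual Jacobi--Trudi determinant reproduces exactly the matrix defining $\pet_k(\lambda)$, completing the identification.

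The main obstacle is the (modest but essential) step of choosing the \emph{dual} Jacobi--Trudi form rather than the ordinary one: if one instead used $s_\mu = \det[h_{\mu_i - i + j}]$, the entries $h_n(-\zeta, \ldots, -\zeta^{k-1})$ at this specialization are more complicated and do not visibly match $\chi(0 \le n < k)$. Once the dual form is identified and one checks that the matrix sizes coincide (both are $\ell(\lambda) \times \ell(\lambda)$, since $(\lambda^{\trans})_1 = \ell(\lambda)$), extracting the coefficient of $z^m$ from the rewritten generating function yields $G(k,m) = \sum_{\lambda \vdash m} \pet_k(\lambda) s_\lambda$.
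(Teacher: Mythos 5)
Your argument is correct, and there is nothing in the paper to compare it against: the paper states Theorem \ref{thm:k-Petrie-Schur} as a quoted result of Grinberg \cite{Grinberg} and gives no proof of it, so your derivation is an independent, self-contained route. The chain of steps checks out: $1+z+\cdots+z^{k-1}=\prod_{j=1}^{k-1}(1-\zeta^j z)$ (the leading coefficient works out to $1$), so the defining product becomes $\prod_{i}\prod_{j=1}^{k-1}\bigl(1+x_i(-\zeta^j z)\bigr)$; the dual Cauchy identity with the finite alphabet $(-\zeta z,\dots,-\zeta^{k-1}z)$ plus homogeneity of $s_{\lambda^{\trans}}$ gives the expansion with coefficients $s_{\lambda^{\trans}}(-\zeta,\dots,-\zeta^{k-1})z^{|\lambda|}$; since specialization is a ring homomorphism, the dual Jacobi--Trudi identity $s_{\lambda^{\trans}}=\det[e_{\lambda_i-i+j}]_{i,j=1}^{\ell(\lambda)}$ survives it, and $\sum_n e_n(-\zeta,\dots,-\zeta^{k-1})y^n=1+y+\cdots+y^{k-1}$ gives exactly $e_n=\chi(0\le n<k)$, so the determinant is $\pet_k(\lambda)$ as defined in (\ref{eqn:k-Petrie}). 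Your choice of matrix size is also right, since $(\lambda^{\trans})_1=\ell(\lambda)$ matches the $\ell(\lambda)\times\ell(\lambda)$ determinant in (\ref{eqn:k-Petrie}). Two small sanity points worth noting, both consistent with your setup: when $\lambda_1\ge k$ the specialized $s_{\lambda^{\trans}}$ vanishes (too few variables) in agreement with $\pet_k(\lambda)=0$, and the degenerate case $k=1$ goes through with the empty-alphabet conventions. What this approach buys is a short conceptual explanation of where the Petrie numbers come from, namely as the principal-type specialization $s_{\lambda^{\trans}}(-\zeta,\dots,-\zeta^{k-1})$, whereas the present paper treats $\pet_k(\lambda)$ purely through Grinberg's determinantal and combinatorial descriptions (Theorem \ref{thm:Grinberg}) and builds its core/rim-hook interpretation on top of those.
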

An equivalent description for $\pet_k(\lambda)$ was given by Fu--Mei \cite[Proposition~2.9]{FM}. 
For $\lambda,\mu\vdash m$, we say that $\lambda\unrhd\mu$ if $\lambda_1+\cdots+\lambda_i\ge\mu_1+\cdots+\mu_i$ for all $i$. By careful analysis and using a skewing operator on symmetric functions, Grinberg \cite[Section~4]{Grinberg} proved the following conjecture of Liu--Polo \cite[Corollary~1.4.4]{LP}, who obtained the result for all primes $k$.

\begin{thm}[Liu--Polo conjecture] \label{thm:LP-conjecture} For $k\ge 2$, we have
\[
\sum_{\substack{\lambda\,\vdash\, k; \\ \lambda\unlhd (k-1,1)}} m_{\lambda}=\sum_{i=0}^{k-2} (-1)^i s_{(k-1-i,1^{i+1})}. 
\]
and 
\[
\sum_{\substack{\lambda\,\vdash\, 2k-1; \\ \lambda\unlhd (k-1,k-1,1)}} m_{\lambda}=\sum_{i=0}^{k-2} (-1)^i s_{(k-1,k-1-i,1^{i+1})}. 
\]
\end{thm}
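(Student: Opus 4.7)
The first step is to identify the left-hand side of each identity with the Schur expansion of a suitable Petrie symmetric function. For $\lambda\vdash k$, the dominance condition $\lambda\unlhd(k-1,1)$ reduces to $\lambda_1\le k-1$, which in turn is equivalent to $\lambda_i<k$ for all $i$; by \eqref{eqn:monomial} the left-hand side of the first identity is therefore $G(k,k)$. Similarly, for $\lambda\vdash 2k-1$ the condition $\lambda\unlhd(k-1,k-1,1)$ again reduces to $\lambda_1\le k-1$ (since $\lambda_1+\lambda_2\le 2\lambda_1\le 2k-2$ is automatic once $\lambda_1\le k-1$), so the left-hand side of the second identity equals $G(k,2k-1)$. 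Applying Theorem~\ref{thm:k-Petrie-Schur} then reduces both identities to the following Petrie-number evaluations: $\pet_k(\lambda)=(-1)^i$ when $\lambda=(k-1-i,1^{i+1})$ with $0\le i\le k-2$ and $\pet_k(\lambda)=0$ for every other $\lambda\vdash k$, together with the analogous statement for $\lambda\vdash 2k-1$ with $\lambda=(k-1,k-1-i,1^{i+1})$.

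I would then establish these evaluations directly from \eqref{eqn:k-Petrie}. For a hook $\lambda=(k-1-i,1^{i+1})$, the matrix $[\chi(0\le\lambda_a-a+b<k)]$ is a band matrix whose rows are translates of a length-$k$ window of $1$'s restricted to $[1,\ell(\lambda)]$; the first row reduces to a single prefix of $1$'s with a zero in the last column, and cofactor expansion along that row, combined with an induction on $\ell(\lambda)$, produces the alternating sign $(-1)^i$. The same technique, now reading the first nontrivial shift off the second row, handles the two-row hook $(k-1,k-1-i,1^{i+1})$. For every other $\lambda$ of size $k$ or $2k-1$ with $\lambda_1<k$, either the first (or second) row of the matrix is forced to vanish---because $\lambda_1$ (or $\lambda_1+\lambda_2$) is large enough to push the $1$-window outside $[1,\ell(\lambda)]$---or two adjacent rows of the matrix coincide, in both cases forcing a zero determinant. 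A short case analysis on $\lambda_1,\lambda_2$ and $\ell(\lambda)$ then covers all remaining shapes.

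The most delicate point is the systematic vanishing step for $|\lambda|=2k-1$: there are considerably more near-hook candidates of this size, and one has to carefully rule out accidental nonzero determinants beyond the two-row-hook family. A cleaner alternative, which I would also pursue in parallel, is to invoke the combinatorial interpretation of $\pet_k(\lambda)$ in terms of the $k$-core of $\lambda$ and a sequence of size-$k$ rim-hook removals announced in the abstract: a nonzero Petrie number forces $\core_k(\lambda)=\emptyset$ when $|\lambda|=k$ and $\core_k(\lambda)\vdash k-1$ when $|\lambda|=2k-1$. Combined with the constraint $\lambda_1<k$ inherited from the left-hand side, these conditions isolate precisely the hook and two-row-hook families in the conjecture, and the sign $(-1)^i$ then reads off as the parity of the height of the unique size-$k$ rim hook removed along the way.
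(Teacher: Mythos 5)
Your reduction of both left-hand sides to $G(k,k)$ and $G(k,2k-1)$ via \eqref{eqn:monomial}, followed by Theorem~\ref{thm:k-Petrie-Schur}, is exactly the paper's first step, and your ``cleaner alternative'' is precisely the paper's proof: by Theorem~\ref{thm:main-1}, a nonzero $\pet_k(\lambda)$ with $\lambda_1<k$ forces $\core_k(\lambda)=\varnothing$ when $|\lambda|=k$ and $\core_k(\lambda)=(k-1)$ when $|\lambda|=2k-1$, which isolates the families $(k-1-i,1^{i+1})$ and $(k-1,k-1-i,1^{i+1})$, each obtained from its core by removing a single rim hook of height $i+1$, whence the coefficient $(-1)^{(i+1)+1}=(-1)^i$ (note the sign is $(-1)^{\height+1}$, not literally ``the parity of the height'' as you phrase it --- a harmless slip since you state the target sign correctly). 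Your primary route, a direct evaluation of the determinant \eqref{eqn:k-Petrie} by band-matrix cofactor expansion, is genuinely different from the paper and would be self-contained (not relying on Theorem~\ref{thm:main-1}), but as written it is only a sketch: the alternating-sign computation for hooks is plausible, while the systematic vanishing for all other shapes of size $2k-1$ --- which you yourself flag as the delicate point --- is not carried out, and ``two adjacent rows coincide or a row vanishes'' does not obviously cover all cases. So the proposal is correct and complete along the route that coincides with the paper's, with the determinant route remaining an unfinished alternative whose main payoff would be independence from the core/rim-hook machinery.
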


\subsection{Main results} 
The first main result in this paper is the following interpretation of $\pet_k(\lambda)$ in terms of the $k$-core of $\lambda$ and a sequence of rim hooks of size $k$ removed from $\lambda$. As an immediate result, we give a simple proof of the Liu--Polo conjecture.

\begin{thm} \label{thm:main-1} For $k\ge1$ and $\lambda\vdash m$, the following results hold.
\begin{enumerate}
\item If $\lambda_1\ge k$ then $\pet_k(\lambda)=0$.
\item Let $\lambda_1<k$. If $\core_k(\lambda)$ has more than one part then $\pet_k(\lambda)=0$; otherwise let $q=\lfloor\frac{m}{k}\rfloor$ and let $\lambda^0,\lambda^1,\dots,\lambda^q$ be any sequence of partitions such that
\[
\core_k(\lambda)=\lambda^0\subset\lambda^1\subset\cdots\subset\lambda^q=\lambda
\]
and $\lambda^{j}/\lambda^{j-1}$ is a rim hook of size $k$ for each $j\in\{1,\dots, q\}$. Then 
\[
\pet_k(\lambda)=\left\{
                   \begin{array}{ll}
                   1       & \mbox{if $\lambda=\core_k(\lambda)$} \\[0.5ex]
                   {\displaystyle\prod_{j=1}^q (-1)^{\height(\lambda^{j}/\lambda^{j-1})+1}} & \mbox{otherwise.} 
                   \end{array}
                \right.
\]
\end{enumerate}
\end{thm}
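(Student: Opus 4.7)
The plan is to dispose of (i) in one line and then reduce (ii) to two lemmas by induction on $q = \lfloor m/k\rfloor$, both lemmas resting on a single observation about indicator matrices on the $k$-abacus. For (i), observe that $\lambda_1 \ge k$ forces $\lambda_1 - 1 + j \ge k$ for every $j \ge 1$, so every entry of the first row of the matrix in \eqref{eqn:k-Petrie} vanishes and $\pet_k(\lambda) = 0$.

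For (ii), assume $\lambda_1 < k$. I would pad $\lambda$ to length $N = \ell(\lambda)$ with trailing zeros (a direct check shows that the determinant is invariant under such padding), put $\beta_i = \lambda_i + N - i$, and reverse the column order to obtain a matrix $\tilde M_\lambda$ whose $i$-th row is the indicator of $[\beta_i - k + 1, \beta_i] \cap [0, N-1]$. The pivotal observation is: for such an indicator matrix, \emph{columns $p$ and $p+1$ coincide unless one of the positions $p$ or $p+k$ is occupied by some $\beta_i$}, since a bead at $\beta$ distinguishes these two columns iff $\beta \in \{p, p+k\}$.

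Using this, the \emph{rim-hook step} $\pet_k(\lambda) = (-1)^{\height(\lambda/\mu)+1}\pet_k(\mu)$ (for a $k$-rim hook $\lambda/\mu$ of height $h$) goes as follows. Removing the rim hook moves one bead on the $k$-abacus from $\beta_i$ down to the unoccupied position $\beta_i - k$ and resorts it to slot $i + h$; the resulting cyclic shift of $h + 1$ rows contributes $(-1)^h$. Multilinearity of the determinant in row $i$ then reduces the desired sign $(-1)^{h+1}$ to showing that the matrix $\tilde M_{++}$, obtained from $\tilde M_\lambda$ by replacing row $i$ with the indicator of the enlarged interval $[\beta_i - 2k + 1, \beta_i]$, has zero determinant. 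This is immediate from the observation: in $\tilde M_{++}$ neither $\beta_i - k$ (unoccupied by hypothesis) nor $\beta_i$ (replaced by the enlarged row) is among the active bead positions, so columns $\beta_i - k$ and $\beta_i - k + 1$ agree; both columns lie in $[0, N-1]$ because $\lambda_1 < k$ forces $\beta_i \le N + k - 2$.

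The \emph{base case} says that if $\lambda$ is a $k$-core with $\lambda_1 < k$, then $\pet_k(\lambda) = 1$ for $\ell(\lambda) \le 1$ (direct computation) and $\pet_k(\lambda) = 0$ for $\ell(\lambda) \ge 2$. For the latter, $\lambda_N \ge 1$ gives no bead at 0, and top-alignment on the $k$-abacus then forces the entire runner $0$ to be empty, so there is no bead at $k$ either; the observation now gives that columns $0$ and $1$ of $\tilde M_\lambda$ coincide, whence $\pet_k(\lambda) = 0$. Iterating the rim-hook step down to $\core_k(\lambda)$ (each intermediate $\lambda^j$ inherits $\lambda^j_1 < k$ from $\lambda^j \subset \lambda$) and applying the base case completes the proof of (ii). The main point to verify carefully is the column-equality observation together with the sign bookkeeping in the rim-hook step; everything else is routine.
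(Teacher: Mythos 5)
Your argument is correct, and it takes a genuinely different route from ours. We never touch the determinant \eqref{eqn:k-Petrie} directly: we invoke Grinberg's closed-form sign formula (Theorem \ref{thm:Grinberg}), encode $\lambda^{\trans}$ on a $k$-runner abacus, and track how the sequences $\beta(\lambda^{\trans})$, $\gamma(\lambda^{\trans})$ and the non-inversion count change under removal of a $k$-rim hook (Lemmas \ref{lem:one-part} and \ref{lem:non-inversion}), so that the ratio $\pet_k(\lambda^{j})/\pet_k(\lambda^{j-1})$ comes out of the exponent in \eqref{eqn:pet-calculation}. You instead work with the Petrie determinant itself: after padding and reversing columns, the rows become indicators of length-$k$ intervals $[\beta_i-k+1,\beta_i]$, and your single observation --- columns $p$ and $p+1$ agree unless some $\beta_i$ equals $p$ or $p+k$ --- yields both the recursion (via row-multilinearity, splitting the length-$2k$ interval and killing the auxiliary determinant $\tilde M_{++}$ by an equal-column pair at $\beta_i-k$, $\beta_i-k+1$, with the $(-1)^h$ from the row cycle) and the base case (a $k$-core with at least two parts has no bead at $0$ or $k$, so columns $0$ and $1$ coincide). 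Your route is more elementary and self-contained: it does not need Theorem \ref{thm:Grinberg} at all, it reproves part (i) in one line, and the sign bookkeeping is just a cycle of $h+1$ rows. What it quietly relies on is the standard $\beta$-number dictionary (removing a $k$-rim hook replaces some $\beta_i$ by the unoccupied $\beta_i-k\ge 0$, with height equal to the number of beads strictly in between), which we only use in conjugated form via \cite{JK}, and one bookkeeping point worth stating explicitly: $\lambda$ and $\mu$ must be padded to the same length $N$ so that the column-reversal sign $(-1)^{\binom{N}{2}}$ cancels when passing from $\det\tilde M_{\lambda}$, $\det\tilde M_{\mu}$ back to $\pet_k(\lambda)$, $\pet_k(\mu)$. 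What our approach buys in exchange is the explicit control of $\gamma(\lambda^{\trans})$ (distinctness, underlying set $\{1,\dots,k\}\setminus\{k-d\}$, cyclic-shift behaviour), which is exactly the machinery reused in Section 3 to prove Theorem \ref{thm:main-2}; your determinant argument, while cleaner for Theorem \ref{thm:main-1} alone, would not by itself feed into that later analysis.
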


\begin{exa} \label{exa:G58} {\rm
Let $k=4$ and $m=8$. The partitions $\lambda\vdash 8$ with $\lambda_1<4$ and empty 4-core are shown in Figure \ref{fig:4-core} along with their rim hooks of size 4. The values of $\height(\lambda^1/\lambda^0)+\height(\lambda/\lambda^1)+2$ for these partitions are 4, 5, 6, 6, 7 and 8, accordingly. We have $G(4,8)=s_{(3,3,2)}-s_{(3,2,2,1)}+s_{(3,1,1,1,1,1)}+s_{(2,2,2,2)}-s_{(2,1,1,1,1,1,1)}+s_{(1,1,1,1,1,1,1,1)}$.

}
\end{exa}

\begin{figure}[ht]
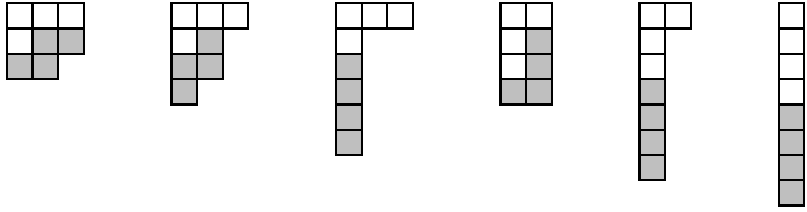

\begin{center}
\ytableausetup{smalltableaux}
\ydiagram{3,3,2}*[*(lightgray)]{0,1+2,2}\hspace{0.4in}
\ydiagram{3,2,2,1}*[*(lightgray)]{0,1+1,2,1} \hspace{0.4in}
\ydiagram{3,1,1,1,1,1}*[*(lightgray)]{0,0,1,1,1,1} 
\hspace{0.4in}
\ydiagram{2,2,2,2}*[*(lightgray)]{0,1+1,1+1,2}
\hspace{0.4in}
\ydiagram{2,1,1,1,1,1,1}*[*(lightgray)]{0,0,0,1,1,1,1}
\hspace{0.4in}
\ydiagram{1,1,1,1,1,1,1,1}*[*(lightgray)]{0,0,0,0,1,1,1,1}
\end{center}
\caption{\small The set of partitions $\lambda\vdash 8$ with $\lambda_1<4$ and empty 4-core.}
\label{fig:4-core}
\end{figure}

We further study the signed multiplicities of the  expansion of the product of $G(k,m)$ with a power sum symmetric function $p_n$ in the Schur basis. For $n=2$, Alexandersson \cite[Conjecture~5.1]{Grinberg} suggested a conjecture that the expansion of $G(k,m)\cdot p_2$ is signed multiplicity free, i.e., the coefficients are $-1$, $0$ and $1$. However, it is false if $p_2$ is replaced by $p_3$. For example,
\begin{align*}
G(3,5)\cdot p_2 &= s_{(4,2,1)}-s_{(4,1,1,1)}-s_{(3,3,1)}+s_{(2,2,2,1)}-s_{(2,2,1,1,1)}+s_{(2,1,1,1,1,1)} \\
G(3,5)\cdot p_3 &= s_{(5,2,1)}-s_{(5,1,1,1)}-s_{(4,3,1)}+s_{(3,3,1,1)}-2s_{(2,2,2,2)}+s_{(2,2,1,1,1,1)}-s_{(2,1,1,1,1,1,1)}.
\end{align*}

The second main result in this paper is to give necessary and sufficient conditions on the parameters $k$ and $m$ in order for such expansions of $G(k,m)\cdot p_n$ to be signed multiplicity free for all $n\ge 1$.

\begin{thm} \label{thm:main-2} For all $n\ge 1$, in the following expansion
\[
G(k,m)\cdot p_n=\sum_{\lambda\,\vdash\, (m+n)} g_{\lambda}s_{\lambda},
\]
there exists a $s_{\mu}$ with coefficient $g_{\mu}\not\in\{-1,0,1\}$
if and only if $k\ge 3$, $k\mid n$ and $m\ge n$.

\end{thm}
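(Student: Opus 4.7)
The plan is to combine Theorem~\ref{thm:k-Petrie-Schur} with the Murnaghan--Nakayama rule to obtain the identity
\[
g_\mu \;=\; \sum_{\lambda} \pet_k(\lambda)\,(-1)^{\height(\mu/\lambda)},
\]
where the sum ranges over $\lambda\vdash m$ for which $\mu/\lambda$ is a rim hook of size $n$. Theorem~\ref{thm:main-1} then restricts the sum to those $\lambda$ satisfying $\lambda_1<k$ and having $\core_k(\lambda)$ with at most one part; each such $\lambda$ contributes $\pet_k(\lambda)=\pm 1$. Thus $|g_\mu|\ge 2$ if and only if at least two such Petrie-admissible $\lambda$'s contribute the same signed summand to a common $\mu$.

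For the necessity direction, the cases $k=1$ and $k=2$ are settled at once: $G(1,m)$ is either $0$ or $1$, while $G(2,m)=e_m=s_{(1^m)}$ is a single Schur function, so $G(k,m)\cdot p_n$ is at most one Murnaghan--Nakayama expansion and hence its coefficients already lie in $\{-1,0,1\}$. For $k\ge 3$ with either $k\nmid n$ or $m<n$, I would show that each $\mu$ admits at most one Petrie-admissible $\lambda$. The natural tool is the $k$-abacus: removing a rim hook of size $n$ from $\mu$ corresponds to sliding one bead up $n$ positions, shifting its runner by $-n\bmod k$. When $k\nmid n$, the runner actually changes, and the constraint that $\core_k(\lambda)$ has at most one part forces the destination configuration to be rigid enough that the source bead is pinned down. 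When $m<n$, the admissible $\lambda$ is so small that at most one inclusion $\lambda\subset\mu$ with the required rim-hook property is possible.

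For the sufficiency direction, given $k\ge 3$, $n=jk$ and $m\ge n$, I would construct an explicit offending $\mu$, guided by the $G(3,5)\cdot p_3$ calculation displayed just above the theorem statement, where $\mu=(2,2,2,2)$ carries coefficient $-2$ through the two Petrie-admissible summands $\lambda=(2,2,1)$ and $\lambda=(2,1,1,1)$. A natural family is to place a rectangular block $((k-1)^{r})$ inside $\mu$ with $r$ large enough to support two distinct rim-hook removals of size $n=jk$---typically an ``L''-shaped hook exiting the right column along the bottom, and a ``column''-shaped hook lying entirely in the rightmost column---and then augment by additional parts to achieve the required size $m+n$. A parity check using Theorem~\ref{thm:main-1}(ii) shows that the two resulting $\lambda$'s share the same single-part $k$-core, are both Petrie-admissible, and contribute with the same sign, producing $|g_\mu|\ge 2$.

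The main obstacle will be the $k\nmid n$ sub-case of necessity, where uniqueness of the Petrie-admissible $\lambda$ requires a careful abacus (or $k$-core/$k$-quotient) bookkeeping against the single-part-core constraint, and where the $\pm n\bmod k$ runner shift must be exploited quantitatively. A secondary difficulty is making the sufficiency construction uniform across the full range $(k,m,n)$ with $k\ge 3$, $k\mid n$, $m\ge n$, so that the two candidate $\lambda$'s remain admissible and have matching sign contributions for every $j\ge 1$, particularly when the rim hooks of size $jk$ grow large relative to the block $((k-1)^r)$.
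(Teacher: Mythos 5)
Your overall frame (expand via Murnaghan--Nakayama, so that $g_\mu=\sum_\lambda \pet_k(\lambda)(-1)^{\height(\mu/\lambda)}$ over rim-hook removals of size $n$, then analyze when two admissible $\lambda$'s hit the same $\mu$) matches the paper, and your treatment of $k\le 2$ and of the case $m<n$ (at most one removable $n$-rim hook when $|\mu|<2n$) is essentially the paper's argument. But the core of the necessity direction is wrong as planned: for $k\ge 3$ and $k\nmid n$ you propose to show that each $\mu$ admits \emph{at most one} Petrie-admissible $\lambda$, with the abacus ``pinning down'' the source bead. Uniqueness is simply false in this regime; the paper's own Example before Proposition \ref{pro:k=2} exhibits it: in $G(5,8)\cdot p_3$ the shape $\mu=(4,4,1,1,1)$ receives contributions from both $\lambda=(4,4)$ and $\lambda=(3,2,1,1,1)$, both with $\pet_5(\lambda)=1$. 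The coefficient is in $\{-1,0,1\}$ not because the contributor is unique but because distinct contributors always occur with \emph{opposite} signed weights when $k\nmid n$ (and with equal signed weights when $k\mid n$); this sign dichotomy, proved in the paper via the non-inversion parity of the cyclic shifts of $\gamma(\lambda^{\trans})$ under rim-hook addition/removal (Lemmas \ref{lem:non-inversion} and \ref{lem:cyclic-shift}, Proposition \ref{pro:opposite-sign}), is the key idea your plan is missing, and no amount of ``rigidity'' bookkeeping will deliver uniqueness because it does not hold.

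The same omission undermines your sufficiency sketch. Knowing that your two constructed $\lambda$'s (from the $((k-1)^r)$ block) contribute with the same sign does not by itself give $|g_\mu|\ge 2$: other Petrie-admissible $\lambda$'s may also yield the same $\mu$, and without the general statement that \emph{all} pairs of distinct contributors have equal signed weights when $k\mid n$ (Proposition \ref{pro:opposite-sign}) you cannot rule out cancellation; a parity check on just your two shapes is not enough. (The paper avoids your uniformity worry by an explicit four-case construction using columns of $1$'s and $2$'s plus a row of length $d=m\bmod k$, but that is a secondary difference; the essential missing ingredient is the global sign-comparison lemma for contributors sharing the same $\mu$.)
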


\section{Proof of Theorem \ref{thm:main-1}}
In this section we shall study the connection between the $k$-core and the $k$-Petrie number of a partition $\lambda$, and prove Theorem \ref{thm:main-1} using Grinberg's description of $\pet_k(\lambda)$.

\smallskip
Fix a positive integer $k$. For any partition $\mu=(\mu_1,\dots,\mu_{\ell})$ with $\ell<k$, we will identify $\mu$ with the $(k-1)$-tuple $(\mu_1,\dots,\mu_{k-1})$, where $\mu_{\ell+1}=\mu_{\ell+2}=\cdots=0$. For the congruence modulo $k$, let $\overline{j}$ denote the integer congruent to $j$ mod $k$ with $1\le \overline{j}\le k$ for all $j\in\ZZ$.
Define a map $\beta:\mu\mapsto (\beta_1,\dots,\beta_{k-1})\in\ZZ^{k-1}$ and a map $\gamma:\mu\mapsto(\gamma_1,\dots,\gamma_{k-1})\in\{1,\dots,k\}^{k-1}$ by setting
\begin{equation} \label{eqn:beta-gamma}
\beta_i=\mu_i-i\qquad\mbox{and}\qquad \gamma_i=\overline{\beta_i}\pmod k
\end{equation}
for each $i\in\{1,\dots, k-1\}$. Moreover, if all of $\gamma_1,\dots,\gamma_{k-1}$ are distinct, let $\ninv(\gamma(\mu))$ denote the number of \emph{non-inversions} of $(\gamma_1,\dots,\gamma_{k-1})$, i.e.,
\begin{equation} \label{eqn:ninv}
\ninv(\gamma(\mu)):=\#\{(i,j):\gamma_i<\gamma_j, 1\le i<j\le k-1 \}.
\end{equation}
For example, let $k=6$ and $\mu=(7,4,2,1)$. We identify $\mu$ with $(7,4,2,1,0)$, and find $\beta(\mu)=(6,2,-1,-3,-5)$ and $\gamma(\mu)=(6,2,5,3,1)$. We have $\ninv(\gamma(\mu))=2$. 

\smallskip

Grinberg derived the following result as a combinatorial alternative to (\ref{eqn:k-Petrie}) for the calculation of $\pet_k(\lambda)$   \cite[Theorem~2.15]{Grinberg}.

\begin{thm}[Grinberg] \label{thm:Grinberg} For all $k\ge 1$ and any partition $\lambda$, the following results hold.
\begin{enumerate}
\item If $\lambda_1\ge k$ then $\pet_k(\lambda)=0$.
\item Let $\lambda_1<k$. Let $\beta(\lambda^{\trans})=(\beta_1,\dots,\beta_{k-1})$ and $\gamma(\lambda^{\trans})=(\gamma_1,\dots,\gamma_{k-1})$. If all of $\gamma_1,\dots,\gamma_{k-1}$ are distinct then
\begin{equation} \label{eqn:pet-calculation}
\pet_k(\lambda)=(-1)^{\beta_1+\cdots+\beta_{k-1}+\ninv(\gamma(\lambda^{\trans}))+\gamma_1+\cdots+\gamma_{k-1}};
\end{equation}
otherwise $\pet_k(\lambda)=0$.
\end{enumerate}
\end{thm}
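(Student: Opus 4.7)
My plan is to derive Theorem~\ref{thm:main-1} from Grinberg's combinatorial formula (Theorem~\ref{thm:Grinberg}) together with the standard abacus interpretation of $k$-rim hooks. Throughout I pass to the conjugate: since $\lambda_1 < k$ in case (ii), the partition $\mu = \lambda^{\trans}$ satisfies $\ell(\mu) \le k - 1$, so $\mu$ may be read as a $(k-1)$-tuple whose beta sequence $\beta(\mu) = (\mu_i - i)$ realizes an abacus with $k-1$ beads on $k$ runners, and $\gamma_i = \overline{\beta_i}$ marks which runner carries the $i$th bead. Part (i) is immediate from Theorem~\ref{thm:Grinberg}(i).

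For part (ii), I first prove the vanishing claim. The key observation is that removing a $k$-rim hook from $\lambda$ corresponds on the conjugate abacus to sliding one bead along its own runner by $k$ positions, so the multiset $\{\gamma_1,\ldots,\gamma_{k-1}\}$ is invariant under the passage $\lambda^{\trans} \mapsto \core_k(\lambda^{\trans}) = (\core_k(\lambda))^{\trans}$. A short abacus argument---pushing each lone bead on runner $r$ down to its minimum admissible position---then shows that the $(k-1)$-bead $k$-cores with all $\gamma$'s distinct are precisely the single-column partitions $(1^a)$ with $0 \le a \le k-1$; these conjugate to the single-row $k$-cores, namely those $\lambda$ with $\core_k(\lambda)$ having at most one part. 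Combined with Theorem~\ref{thm:Grinberg}(ii), this yields $\pet_k(\lambda) = 0$ whenever $\core_k(\lambda)$ has more than one part.

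It remains to establish the product formula under the assumption $\core_k(\lambda) = (a)$ for some $0 \le a \le k - 1$. I will compute the base case $\pet_k((a))$ directly from Theorem~\ref{thm:Grinberg}(ii): for $\mu^{\trans} = (1^a)$ padded to length $k-1$, a straightforward calculation produces $\gamma = (k, k-1, \ldots, k-a+1, k-a-1, \ldots, 1)$ with $\ninv(\gamma) = 0$ (both blocks decreasing, the first strictly dominating the second), and a telescoping computation gives $\sum \beta_i + \sum \gamma_i = 2a$, so $\pet_k((a)) = 1$. I will then track how the exponent in Theorem~\ref{thm:Grinberg}(ii) changes upon adding a single $k$-rim hook of height $h := \height(\lambda^j / \lambda^{j-1})$: the moving bead contributes $+k$ to $\sum \beta_i$, the multiset of $\gamma$'s is unchanged, and $\ninv(\gamma)$ changes by the number of adjacent transpositions needed to re-sort the beta sequence, which is the leg-length on the conjugate abacus. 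Because conjugating a rim hook swaps its leg and arm, this leg-length equals $k - 1 - h$. Hence $\pet_k$ is multiplied by $(-1)^{k + (k-1-h)} = (-1)^{h+1}$ at each step, and iterating from $\pet_k(\lambda^0) = 1$ produces the claimed product.

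The main obstacle is the final sign identification: one must carefully recognize that the resorting cost is the \emph{conjugate} leg-length $k - 1 - h$ rather than $h$ itself, which is somewhat counterintuitive since the height being tracked in the statement is that of the rim hook in $\lambda$, not in $\lambda^{\trans}$. Without this identity the sign contribution of each rim-hook addition would appear to depend on the parity of $k$; the combination $(-1)^{k + (k-1-h)} = (-1)^{h+1}$ is precisely what cancels this spurious dependence and yields a formula uniform in $k$.
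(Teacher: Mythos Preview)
Your proposal does not address the stated theorem. The statement you were asked to prove is Theorem~\ref{thm:Grinberg} (Grinberg's combinatorial formula for $\pet_k(\lambda)$), but your write-up is instead a proof of Theorem~\ref{thm:main-1}, which \emph{assumes} Theorem~\ref{thm:Grinberg} as input. In the paper, Theorem~\ref{thm:Grinberg} is not proved at all: it is quoted from \cite[Theorem~2.15]{Grinberg} and used as a black box. So there is no ``paper's own proof'' of the stated result to compare against, and your proposal, as a proof of the stated result, is circular.

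That said, if one reads your proposal as a proof of Theorem~\ref{thm:main-1} (which is clearly what you intended), then your argument is essentially the same as the paper's. Both proceed by (a) identifying the condition ``all $\gamma_i$ distinct'' with ``$\core_k(\lambda)$ has at most one part'' via the abacus (the paper's Lemma~\ref{lem:one-part}), (b) computing $\pet_k\big((d)\big)=1$ directly from the exponent in \eqref{eqn:pet-calculation}, and (c) tracking the change in that exponent across a single $k$-rim hook addition. Your key computation---that $\sum\beta_i$ shifts by $k$, $\sum\gamma_i$ is unchanged, and $\ninv(\gamma)$ changes by $b-1=k-1-h$ where $b$ is the column-count of the ribbon---is exactly the content of the paper's Lemma~\ref{lem:non-inversion}, and the final sign simplification $(-1)^{k+(k-1-h)}=(-1)^{h+1}$ matches the paper's ratio calculation verbatim. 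The only cosmetic difference is that the paper phrases step (c) in terms of \emph{removing} a rim hook (Lemma~\ref{lem:non-inversion}) while you phrase it in terms of \emph{adding} one; the bookkeeping is identical.
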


Let $\delta_n:=(n-1,n-2,\dots,0)$. We will encode a partition $\mu$ with $\ell(\mu)<k$ in an \emph{abacus} with $k$ vertical runners labelled by $0,1,\dots,k-1$. The partition $\mu$ is shown in the abacus by a set of $k-1$ beads, described by the strictly decreasing sequence
\begin{equation} \label{eqn:alpha}
\mu+\delta_{k-1}:=(\mu_1+k-2, \mu_2+k-3,\dots, \mu_{k-1})
\end{equation}
(these are called the $\beta$-\emph{numbers} for $\mu$ in \cite{JK,Walker}).
For each $j$, we place a bead representing $\mu_j$ in the $r_j$-th row and $c_j$-th column if $\mu_j+k-1-j=k(r_j-1)+c_j$, $r_j\ge 1$ and $0\le c_j\le k-1$. For example, if $k=6$ and $\mu=(7,4,2,1)$, the corresponding abacus of $\mu$ is shown in Figure \ref{fig:abacus}, which is described by the sequence $\mu+\delta_{5}=(11,7,4,2,0)$. In this notation, removal of a rim hook of size $k$ corresponds to moving a bead one place up in its runner \cite[Lemma 2.7.13]{JK}.

\begin{figure}[ht]
\begin{center}
\begin{tabular}{cccccc}   
0  &  1 & 2 & 3 & 4 &  5 \\
    \hline
   $\circ$  & $\cdot$  & $\circ$  & $\cdot$    &  $\circ$  &  $\cdot$   \\
   $\cdot$  & $\circ$  & $\cdot$  & $\cdot$    &  $\cdot$  &  $\circ$     
\end{tabular}
\end{center}
\caption{\small The abacus with six runners whose beads encode the partition $(7,4,2,1)$.}
\label{fig:abacus}
\end{figure}


\smallskip
To prove Theorem \ref{thm:main-1}, it suffices to consider the partitions $\lambda$ with $\lambda_1<k$. We have the following relation between $\core_k(\lambda)$ and $\gamma\big(\lambda^{\trans}\big)$.

\begin{lem} \label{lem:one-part} For any partition $\lambda$ with $\lambda_1<k$, let $\gamma\big(\lambda^{\trans}\big)=(\gamma_1,\dots,\gamma_{k-1})$. Then all of $\gamma_1,\dots,\gamma_{k-1}$ are distinct if and only if the $k$-core of $\lambda$ either has only one part or is empty. 
\end{lem}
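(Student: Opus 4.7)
The plan is to translate the distinctness condition on $\gamma(\lambda^{\trans})$ into the abacus language introduced just above the lemma, applied directly to $\mu := \lambda^{\trans}$. This is legitimate because $\ell(\mu) = \lambda_1 < k$, so $\mu$ is encoded by exactly $k-1$ beads. Using the classical identity $\core_k(\lambda) = \core_k(\lambda^{\trans})^{\trans}$, the task reduces to showing that the entries of $\gamma(\mu)$ are pairwise distinct if and only if $\core_k(\mu)$ is either empty or of the single-column form $(1^{t})$ for some $t \geq 1$.

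The first step is to link $\gamma_i$ to the column index $c_i$ of the $i$-th bead. Since the bead representing $\mu_i$ sits at abacus position $\mu_i + k - 1 - i$, and since $\mu_i + k - 1 - i \equiv \beta_i - 1 \equiv \gamma_i - 1 \pmod k$, with both $c_i$ and $\gamma_i - 1$ lying in $\{0, \ldots, k-1\}$, we conclude $c_i = \gamma_i - 1$. Hence $\gamma_1, \ldots, \gamma_{k-1}$ are all distinct if and only if the columns $c_1, \ldots, c_{k-1}$ are all distinct.

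Next I would invoke the abacus fact cited in the paper (that removing a size-$k$ rim hook corresponds to sliding a bead one row up in its runner) to identify $\core_k(\mu)$ with the configuration obtained by sliding every bead as far up in its runner as possible. In the \emph{distinct} case, each runner holds at most one bead, so after sliding, all $k-1$ beads end up in row $1$, occupying the positions $\{0, 1, \ldots, k-1\} \setminus \{c^{*}\}$ for the unique empty column $c^{*}$. Reading off the resulting parts via $\mu'_j = p'_j - (k-1-j)$, where $p'_1 > \cdots > p'_{k-1}$ is the sorted list of occupied positions, yields $\core_k(\mu) = (1^{\,k-1-c^{*}})$; conjugating gives $\core_k(\lambda) = (k-1-c^{*})$, which is empty if $c^{*} = k-1$ and has exactly one part otherwise. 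Conversely, in the \emph{non-distinct} case, some runner carries at least two beads, so after sliding, at least one bead remains at row $\geq 2$, giving a final abacus position $\geq k$; unwinding this forces $\core_k(\mu)_1 \geq 2$, so $\core_k(\lambda) = \core_k(\mu)^{\trans}$ has at least two parts.

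The only genuine calculation is verifying the reading-off in the forward direction, which requires a short case split according to whether $c^{*} = k-1$, $c^{*} = 0$, or $0 < c^{*} < k-1$. No step is conceptually deep; the main effort lies in keeping the indices of beads, positions, and parts consistently aligned.
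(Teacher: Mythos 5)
Your proof is correct and follows essentially the same route as the paper: encode $\lambda^{\trans}$ on the $k$-runner abacus, observe that the bead for $\lambda^{\trans}_i$ lies in runner $\gamma_i-1$, and slide beads to the top to read off the $k$-core. The only difference is cosmetic: you spell out the converse (a repeated runner forces a core part $\ge 2$) and the identity $\core_k(\lambda)=\core_k(\lambda^{\trans})^{\trans}$ explicitly, which the paper leaves as ``straightforward.''
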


\begin{proof} 
Encoding $\lambda^{\trans}$ in an abacus, $\lambda^{\trans}+\delta_{k-1}=(\lambda^{\trans}_1+k-2,\lambda^{\trans}_2+k-3,\dots,\lambda^{\trans}_{k-1})$. By (\ref{eqn:beta-gamma}) and (\ref{eqn:alpha}), we have
\begin{equation} \label{eqn:gamma_j-1}
\lambda^{\trans}_j+k-1-j\equiv\gamma_j-1\pmod k
\end{equation}
for each $j\in\{1,\dots, k-1\}$. 
Notice that the bead representing $\lambda^{\trans}_j$ is in the runner labelled by $\gamma_j-1$. Hence all of $\gamma_1,\dots,\gamma_{k-1}$ are distinct if and only if these $k-1$ beads that encode $\lambda^{\trans}$ are in distinct runners of the abacus. Moving each bead to the top of its runner, the $k$-core of $\lambda^{\trans}$ is encoded in the abacus by the sequence
\begin{equation} \label{eqn:core+delta}
\core_k\big(\lambda^{\trans}\big)+\delta_{k-1}=(k-1,\dots,k-d,k-d-2,\dots,0)
\end{equation}
for some $d\in\{1,\dots,k-1\}$ or $(k-2,k-3,\dots,0)$, where $d$ is the remainder of $|\lambda|$ divided by $k$. This implies $\core_k\big(\lambda^{\trans}\big)=(1^d)$ or $(0)$, and hence $\core_k(\lambda)$ either has only one part or is empty. The converse is straightforward.
\end{proof}

By (\ref{eqn:gamma_j-1}) and (\ref{eqn:core+delta}), note that if all of $\gamma_1,\dots,\gamma_{k-1}$ are distinct, the underlying set $\{\gamma_1,\dots,\gamma_{k-1}\}$ of $\gamma\big(\lambda^{\trans}\big)$ is $\{1,\dots,k\}\setminus \{k-d\}$. The following result specifically shows that
when a rim hook $\lambda/\mu$ of size $k$ is removed from $\lambda$, the sequence $\gamma\big(\mu^{\trans}\big)$ is obtained by cyclically shifting a portion of $\gamma\big(\lambda^{\trans}\big)$.

\begin{lem} \label{lem:non-inversion} For any partition $\lambda$ with $\lambda_1<k$, let $\beta(\lambda^{\trans})=(\beta_1,\dots,\beta_{k-1})$ and let $\gamma(\lambda^{\trans})=(\gamma_1,\dots,\gamma_{k-1})$. 
Let $\mu$ be a partition such that $\mu\subset\lambda$, $\lambda/\mu$ is a rim hook of size $k$, and the diagram $[\lambda/\mu]$ has $b$ columns. Then for some $i\in\{1,\dots,k-b\}$ we have
\begin{equation} \label{eqn:beta-mu}
\beta\big(\mu^{\trans}\big)=(\beta_1,\dots,\beta_{i-1},\beta_{i+1},\dots,\beta_{i+b-1},\beta_{i}-k,\beta_{i+b},\dots,\beta_{k-1})
\end{equation}
and
\begin{equation} \label{eqn:gamma-mu}
\gamma\big(\mu^{\trans}\big)=
\begin{pmatrix}
\gamma_i &  \gamma_{i+1} &\cdots & \gamma_{i+b-2} & \gamma_{i+b-1} \\
\gamma_{i+1} & \gamma_{i+2} & \cdots & \gamma_{i+b-1} & \gamma_{i}
\end{pmatrix}
\gamma\big(\lambda^{\trans}\big).
\end{equation}
Moreover, if all of $\gamma_1,\dots,\gamma_{k-1}$ are distinct then 
\begin{equation} \label{eqn:parity}
\ninv\big(\gamma(\lambda^{\trans})\big)+\ninv\big(\gamma(\mu^{\trans})\big)\equiv k+\height\big(\lambda/\mu\big)+1 \pmod 2.
\end{equation} 
\end{lem}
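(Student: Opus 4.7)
My plan is to pass through the abacus of $\lambda^{\trans}$ introduced after Theorem \ref{thm:Grinberg}, and rely on the standard correspondence between the removal of a rim hook of size $k$ and the sliding of a single bead up one row in its runner (\cite[Lemma~2.7.13]{JK}). Since $\lambda^{\trans}/\mu^{\trans}$ is a rim hook of size $k$ obtained by transposing $\lambda/\mu$, it has $b$ rows and $k-b+1$ columns, so $\height(\lambda/\mu)=k-b$ while $\height(\lambda^{\trans}/\mu^{\trans})=b-1$.

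First I would identify the bead that moves. Record $\lambda^{\trans}$ as beads at the strictly decreasing abacus positions $\beta_j+(k-1)$, $j=1,\ldots,k-1$. Removing the rim hook $\lambda^{\trans}/\mu^{\trans}$ replaces exactly one value $\beta_i$ by $\beta_i-k$ (moving the $i$-th bead up one row in its runner), and the classical theorem says this bead jumps over exactly $\height(\lambda^{\trans}/\mu^{\trans})=b-1$ other beads in the linear order. Checking the resulting inequalities $\beta_{i+b-1}>\beta_i-k>\beta_{i+b}$, after re-sorting the new $\beta$-values into strictly decreasing order, $\beta_i-k$ lands in slot $i+b-1$ while $\beta_{i+1},\ldots,\beta_{i+b-1}$ each shift one slot to the left; this gives (\ref{eqn:beta-mu}), and the bound $i\le k-b$ is forced so that slot $i+b-1$ is a valid position. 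Formula (\ref{eqn:gamma-mu}) then comes for free: since $\beta_i-k\equiv\beta_i\pmod k$, the value $\gamma_i$ is preserved and only its index moves, producing precisely the stated cyclic shift.

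For the parity statement, my plan is to factor the cyclic shift that carries $(\gamma_i,\gamma_{i+1},\ldots,\gamma_{i+b-1})$ to $(\gamma_{i+1},\ldots,\gamma_{i+b-1},\gamma_i)$ as a product of exactly $b-1$ adjacent transpositions (move $\gamma_i$ one step to the right, $b-1$ times in total). When $\gamma_1,\ldots,\gamma_{k-1}$ are distinct, each adjacent transposition flips exactly one pair in or out of $\ninv$, so
\[
\ninv\big(\gamma(\mu^{\trans})\big)-\ninv\big(\gamma(\lambda^{\trans})\big)\equiv b-1\pmod 2.
\]
A short arithmetic check using $\height(\lambda/\mu)=k-b$ then gives $k+\height(\lambda/\mu)+1=2k-b+1\equiv b-1\pmod 2$, yielding (\ref{eqn:parity}).

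The main obstacle is pinning down the first step cleanly: conjugating correctly between $\lambda$ and $\lambda^{\trans}$ (the rim hook has $b$ columns in $\lambda$ but $b$ rows in $\lambda^{\trans}$), and applying the correct version of the James--Kerber correspondence so that the number of beads jumped over is counted by the height of the rim hook in $\lambda^{\trans}$ rather than in $\lambda$. Once that geometric picture is fixed, the $\gamma$-formula and the parity computation are essentially bookkeeping.
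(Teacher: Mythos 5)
Your proof is correct and matches the paper's argument in all essentials: the $\beta$-shift formula (\ref{eqn:beta-mu}), the induced $b$-cycle on $\gamma\big(\lambda^{\trans}\big)$ since $\beta_i-k\equiv\beta_i\pmod k$, the parity count via $b-1$ adjacent transpositions of distinct entries, and the conversion $\height(\lambda/\mu)=k-b$ coming from $\height(\xi)+\height(\xi^{\trans})=k-1$. The only difference is that you justify (\ref{eqn:beta-mu}) by invoking the James--Kerber abacus correspondence together with the leg-length/bead-jump fact applied to $\lambda^{\trans}$, whereas the paper derives the same formula by explicitly computing the columns $\mu^{\trans}_j$ in terms of $\lambda^{\trans}_j$; both routes are sound.
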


\begin{proof} Let $\xi$ denote the rim hook $\lambda/\mu$. Note that $[\xi^{\trans}]$ has $b$ rows. Suppose $[\xi^{\trans}]$ meets $[\lambda^{\trans}]$ from row $i$ to row $i+b-1$. 
We observe that
\begin{equation} \label{eqn:mu-transpose}
\mu^{\trans}_j=
\begin{cases}
\lambda^{\trans}_j &  1\le j\le i-1\\[1ex]
\lambda^{\trans}_{j+1}-1 &  i\le j\le i+b-2\\[1ex]
\lambda^{\trans}_j & i+b\le j\le k-1.
\end{cases}
\end{equation}
Since $|\mu^{\trans}|=|\lambda^{\trans}|-k$, we have
\begin{equation} \label{eqn:mu_i+b-1}
\mu^{\trans}_{i+b-1}=\lambda^{\trans}_{i}-k+b-1.
\end{equation}
Let $\beta\big(\mu^{\trans}\big)=\big(\beta'_1,\dots,\beta'_{k-1}\big)$. By (\ref{eqn:beta-gamma}), (\ref{eqn:mu-transpose}) and (\ref{eqn:mu_i+b-1}), we have
\begin{equation} \label{eqn:beta-mu-transpose}
\beta'_j=
\begin{cases}
\beta_j &  1\le j\le i-1\\
\beta_{j+1} & i\le j\le i+b-2\\
\beta_{i}-k & j=i+b-1 \\
\beta_j & i+b\le j\le k-1.
\end{cases}
\end{equation}
Thus (\ref{eqn:beta-mu}) holds. Creating $\gamma\big(\mu^{\trans}\big)$ from $\beta\big(\mu^{\trans}\big)$, we have
\[
\gamma\big(\mu^{\trans}\big)=(\gamma_1,\dots,\gamma_{i-1},\gamma_{i+1},\dots,\gamma_{i+b-1},\gamma_{i},\gamma_{i+b},\dots,\gamma_{k-1}),
\]
which is the result obtained by acting on $\gamma\big(\lambda^{\trans}\big)$ a $b$-cycle that cyclically shifts $\gamma_i,\gamma_{i+1},\dots,\gamma_{i+b-1}$.  Moreover, if all of $\gamma_1,\dots,\gamma_{k-1}$ are distinct then $\ninv\big(\gamma(\lambda^{\trans})\big)+\ninv\big(\gamma(\mu^{\trans})\big)\equiv b-1 \pmod 2$. Using the facts $\height(\xi)+\height\big(\xi^{\trans}\big)=k-1$ and $\height\big(\xi^{\trans}\big)=b-1$, the result follows.
\end{proof}

\begin{exa} \label{exa:cyclic-shift} {\rm
Let $k=6$. Consider the partition $\lambda=(4,3,2,2,1,1,1)$, where $\lambda^{\trans}=(7,4,2,1)$, $\beta(\lambda^{\trans})=(6,2,-1,-3,-5)$ and $\gamma(\lambda^{\trans})=(6,2,5,3,1)$. There are two ways to remove a rim hook of size 6 from $\lambda$, as shown in Figure \ref{fig:rim-hook-6}.

(i)  Let $\mu=(2,1,1,1,1,1,1)$. Note that $\lambda/\mu$ is a rim hook with three columns and $\height(\lambda/\mu)=3$. We have $\mu^c=(7,1)$, $\beta(\mu^{\trans})=(6,-1,-3,-4,-5)$ and $\gamma(\mu^{\trans})=(6,5,3,2,1)$. Moreover,  $\ninv(\gamma\big(\mu^{\trans})\big)=\ninv(\gamma\big(\lambda^{\trans})\big)-2$.

(ii) Let $\mu=(4,3,1)$. Note that $\lambda/\mu$ is a rim hook with two columns and $\height(\lambda/\mu)=4$. We have $\mu^c=(3,2,2,1)$, $\beta(\mu^{\trans})=(2,0,-1,-3,-5)$ and $\gamma(\mu^{\trans})=(2,6,5,3,1)$. Moreover,  $\ninv(\gamma\big(\mu^{\trans})\big)=\ninv(\gamma\big(\lambda^{\trans})\big)+1$.

}
\end{exa}

\begin{figure}[ht]
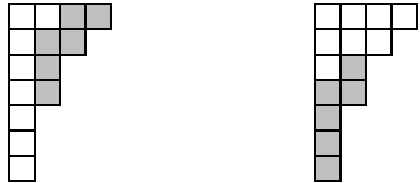

\begin{center}
\ytableausetup{smalltableaux}
\ydiagram{4,3,2,2,1,1,1}*[*(lightgray)]{2+2,1+2,1+1,1+1,1+0,1+0,1+0} \hspace{1in}
\ydiagram{4,3,2,2,1,1,1}*[*(lightgray)]{4+0,3+0,1+1,0+2,0+1,0+1,0+1} \hspace{1in}
\end{center}
\caption{\small The two ways to remove a rim hook of size 6 from $\lambda=(4,3,2,2,1,1,1)$.}
\label{fig:rim-hook-6}
\end{figure}

Now we prove Theorem \ref{thm:main-1}.

\medskip
\noindent
\emph{Proof of Theorem \ref{thm:main-1}.}
Let $\lambda\vdash m$ with $\lambda_1<k$. By Theorem \ref{thm:Grinberg}(ii) and Lemma \ref{lem:one-part}, if $\core_k(\lambda)$ has more than one part then $\pet_k(\lambda)=0$. Suppose $\core_k(\lambda)=(d)$, where $d\equiv m\pmod k$ and $0\le d\le k-1$.
Consider a sequence $\lambda^0,\lambda^1,\dots,\lambda^q$ of partitions such that
\[
\core_k(\lambda)=\lambda^0\subset\lambda^1\subset\cdots\subset\lambda^q=\lambda
\]
and $\lambda^{i}/\lambda^{i-1}$ is a rim hook of size $k$ for each $i\in\{1,\dots,q\}$.

For the $k$-core of $\lambda$, we observe that $\beta\big({\lambda^0}^{\trans}\big)=(0,-1,\dots,-d+1,-d-1,\dots,-k+1)$ if $d\ge 1$ or  $(-1,-2,\dots,-k+1)$ if $d=0$. Then $\gamma\big({\lambda^0}^{\trans}\big)=(k,k-1,\dots,k-d+1,k-d-1,\dots,1)$ or $(k-1,k-2,\dots,1)$, and hence $\ninv\big(\gamma({\lambda^0}^{\trans})\big)=0$. By (\ref{eqn:pet-calculation}), we have $\pet_k(\lambda^0)=(-1)^{2d}=1$.

Given $j\in\{1,\dots, q\}$, let $\beta\big({\lambda^j}^{\trans}\big)=(\beta_1,\dots,\beta_{k-1})$, $\gamma\big({\lambda^j}^{\trans}\big)=(\gamma_1,\dots,\gamma_{k-1})$, $\beta\big({\lambda^{j-1}}^{\trans}\big)=(\beta'_1,\dots,\beta'_{k-1})$, and $\gamma\big({\lambda^{j-1}}^{\trans}\big)=(\gamma'_1,\dots,\gamma'_{k-1})$. By Lemma \ref{lem:non-inversion}, we have
\begin{equation} \label{eqn:beta-gamma-k}
\begin{aligned}
\beta'_1+\cdots+\beta'_{k-1}&=\beta_1+\cdots+\beta_{k-1}-k, \\
\gamma'_1+\cdots+\gamma'_{k-1}&=\gamma_1+\cdots+\gamma_{k-1},
\end{aligned}
\end{equation}
and
\begin{equation} \label{eqn:parity-2}
\ninv\big(\gamma\big({\lambda^j}^{\trans}\big)\big)+\ninv\big(\gamma\big({\lambda^{j-1}}^{\trans}\big)\big)\equiv k+\height(\lambda^{j}/\lambda^{j-1})+1 \pmod 2.
\end{equation}
By (\ref{eqn:pet-calculation}), (\ref{eqn:beta-gamma-k}) and (\ref{eqn:parity-2}), we have
\[
\frac{\pet_k\big(\lambda^j\big)}{\pet_k\big(\lambda^{j-1}\big)}=\frac{(-1)^{\beta_1+\cdots+\beta_{k-1}+\ninv(\gamma({\lambda^j}^{\trans}))+\gamma_1+\cdots+\gamma_{k-1}}}{(-1)^{\beta'_1+\cdots+\beta'_{k-1}+\ninv(\gamma({\lambda^{j-1}}^{\trans}))+\gamma'_1+\cdots+\gamma'_{k-1}}}=(-1)^{\height(\lambda^{j}/\lambda^{j-1})+1}.
\]
By iteration, it follows that
\[
\pet_k(\lambda)=\prod_{j=1}^q \frac{\pet_k\big(\lambda^j\big)}{\pet_k\big(\lambda^{j-1}\big)}=\prod_{j=1}^q (-1)^{\height(\lambda^{j}/\lambda^{j-1})+1}.
\]
The proof of Theorem \ref{thm:main-1} is completed.
\qed

\smallskip

We now give a combinatorial proof of the Liu--Polo conjecture.

\medskip
\noindent
\emph{Proof of Theorem \ref{thm:LP-conjecture}.} Note that the partitions $\lambda\vdash k$ ($2k-1$, respectively) such that $\lambda\unlhd (k-1,1)$ ($(k-1,k-1,1)$, respectively) are precisely those $\lambda$  with $\lambda_i<k$ for all $i$. By (\ref{eqn:monomial}), we have
\[
\sum_{\substack{\lambda\,\vdash\, k; \\ \lambda\unlhd (k-1,1)}} m_{\lambda}=G(k,k)\qquad\mbox{and}\qquad\sum_{\substack{\lambda\,\vdash\, 2k-1; \\ \lambda\unlhd (k-1,k-1,1)}} m_{\lambda}=G(k,2k-1).
\]
Moreover, such partitions $\lambda$ with $\core_k(\lambda)=(0)$ ($(k-1)$, respectively) are of the form $\lambda=(k-1-i,1^{i+1})$ ($(k-1,k-1-i,1^{i+1})$, respectively) for some $i\in\{0,1,\dots,k-2\}$, containing a unique rim hook $\xi=(k-1-i,1^{i+1})$ of size $k$, where $\height(\xi)=i+1$. By Theorems \ref{thm:k-Petrie-Schur} and \ref{thm:main-1}, the results follow. 
\qed

\section{Proof of Theorem \ref{thm:main-2}}

In this section we shall study the expansion of the product of $G(k,m)$ with a power sum symmetric function $p_n$ in the basis of Schur functions, and prove Theorem \ref{thm:main-2}.

\smallskip
The \emph{Murnaghan--Nakayama rule} gives an expression for the product of a Schur function with a power sum symmetric function:
\begin{equation} \label{eqn:MN-rule}
s_{\lambda}p_n=\sum_{\lambda^{+}} (-1)^{\height(\lambda^{+}/\lambda)} s_{\lambda^{+}},
\end{equation}
summed over all partitions $\lambda^{+}\supset\lambda$ such that $\lambda^{+}/\lambda$ is a rim hook of size $n$. See \cite[Section~7.17]{EC2}.

\smallskip
Define
\[
\G(k,m):=\{\lambda\vdash m: \pet_k(\lambda)\neq 0 \}.
\]
By the proof of Theorem \ref{thm:main-1}, recall that every $\lambda\in\G(k,m)$ has the following properties:
\begin{enumerate}
\item $\lambda_1<k$.
\item $\core_k(\lambda)=(d)$, where $d\equiv m\pmod k$ and $0\le d\le k-1$. 
\item The underlying set of $\gamma(\lambda^{\trans})$ is $\{1,\dots,k\}\setminus\{k-d\}$.
\end{enumerate}

Using Murnaghan--Nakayama rule and Theorem \ref{thm:k-Petrie-Schur}, we have 
\begin{equation} \label{eqn:Schur-expansion}
G(k,m)\cdot p_n=\sum_{\lambda\in\G(k,m)}\sum_{\lambda^{+}} (-1)^{\height(\lambda^{+}/\lambda)}\pet_k(\lambda)s_{\lambda^{+}},
\end{equation}
where the inner sum is over all $\lambda^{+}$ such that $\lambda^{+}/\lambda$ is a rim hook of size $n$. We shall enumerate the coefficient of $s_{\lambda^{+}}$ in the expression (\ref{eqn:Schur-expansion}).

\begin{exa} {\rm
We have $G(5,8)=s_{(4,4)}-s_{(3,3,1,1)}+s_{(3,2,1,1,1)}-s_{(3,1,1,1,1,1)}$. The  expansion of $G(5,8)\cdot p_3$ in the Schur basis is shown below. Notice that the term $s_{(4,4,1,1,1)}$ vanishes since the two occurrences of $s_{(4,4,1,1,1)}$ have the opposite signs.
\begin{align*}
G(5,8)\cdot p_3 &= s_{(4,4)}p_3-s_{(3,3,1,1)}p_3+s_{(3,2,1,1,1)}p_3-s_{(3,1,1,1,1,1)}p_3 \\
&= \big(s_{(7,4)}-s_{(6,5)}+s_{(4,4,3)}-s_{(4,4,2,1)}+s_{(4,4,1,1,1)}\big) \\
&\qquad - \big(s_{(6,3,1,1)}-s_{(5,4,1,1)}-s_{(3,3,3,2)}+s_{(3,3,1,1,1,1,1)}\big) \\
&\qquad + \big(s_{(6,2,1,1,1)}-s_{(4,4,1,1,1)}-s_{(3,3,3,1,1)}+s_{(3,2,2,2,2)}+s_{(3,2,1,1,1,1,1,1)}\big) \\
&\qquad - \big(s_{(6,1,1,1,1,1)}-s_{(3,3,2,1,1,1)}+s_{(3,2,2,2,1,1)}+s_{(3,1,1,1,1,1,1,1,1)}\big).
\end{align*}

}
\end{exa}

\begin{pro} \label{pro:k=2} For any $\lambda,\mu\in\G(k,m)$, let $\lambda^{+},\mu^{+}$ be  partitions such that $\lambda^{+}/\lambda$ and $\mu^{+}/\mu$ are rim hooks of size $n$ with $\mu^{+}=\lambda^{+}$. If $\lambda^{+}_1\ge k$ then $\mu=\lambda$.
\end{pro}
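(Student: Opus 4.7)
The plan is to show that, under the hypothesis $\lambda^+_1\ge k$, the rim hook $\lambda^+/\lambda$ is uniquely determined by $\lambda^+$ together with the constraint $\lambda_1<k$, so that $\lambda=\mu$ follows automatically. The main tool is the classical bijection underlying the Murnaghan--Nakayama rule: rim hooks of size $n$ in a partition $\nu$ are in bijection with cells $(i,j)\in[\nu]$ of hook length $n$, where the rim hook associated to $(i,j)$ has top row equal to row $i$ and, upon removal, leaves a partition whose $i$-th part equals $j-1$.

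The first step is to pin down the top row of the rim hook. Since $\lambda\in\G(k,m)$ we have $\lambda_1<k$, while by hypothesis $\lambda^+_1\ge k$; hence $\lambda_1<\lambda^+_1$, which forces $\lambda^+/\lambda$ to contain all cells $(1,\lambda_1+1),\ldots,(1,\lambda^+_1)$ of the first row. In particular, its top row is row $1$, and under the bijection it corresponds to the cell $(1,\lambda_1+1)\in[\lambda^+]$ of hook length $n$, with $\lambda_1+1\le k$. The identical reasoning, applied to $\mu\in\G(k,m)$, identifies $\mu^+/\mu$ with some cell $(1,j')\in[\mu^+]=[\lambda^+]$ of hook length $n$ satisfying $j'\le k$.

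It therefore suffices to show that $\lambda^+$ has at most one cell $(1,j)$ of hook length $n$. Writing $h(1,j)=\lambda^+_1+(\lambda^+)^{\trans}_j-j$, as $j$ increases by one the term $-j$ drops by one while $(\lambda^+)^{\trans}_j$ weakly decreases; hence $h(1,\cdot)$ is strictly decreasing in $j$, and at most one value of $j$ satisfies $h(1,j)=n$. This forces $\lambda_1+1=j'$, and consequently $\lambda=\mu$.

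The argument is short, and I do not anticipate any serious obstacle: the only substantive ingredient beyond the standard rim-hook/hook correspondence is the strict monotonicity of the hook lengths along row~$1$ of $\lambda^+$, which pins down the unique cell that a rim hook chewing into the first row can emanate from.
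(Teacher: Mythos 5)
Your strategy is essentially the paper's: everything reduces to showing that $\lambda^{+}$ admits a unique removable rim hook of size $n$ meeting the first row, which is forced because $\lambda_1,\mu_1<k\le\lambda^{+}_1=\mu^{+}_1$. The paper simply exhibits the unique strip explicitly; you instead invoke the border-strip/hook-cell bijection and the strict monotonicity of $h(1,j)=\lambda^{+}_1+(\lambda^{+})^{\trans}_j-j$ in $j$, which is a clean way of making the paper's ``we observe'' rigorous. So the skeleton is fine.

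There is, however, a concrete error in your statement of the bijection, and it propagates into your identification of the cell. If a border strip of $\lambda^{+}$ has top row $i$ and occupies rows $i,\dots,\ell$, the associated cell of hook length $n$ is $(i,j)$ where $j$ is the leftmost column of the \emph{whole} strip (equivalently, after removal it is the $\ell$-th part, not the $i$-th, that becomes $j-1$; the $i$-th part becomes $\lambda^{+}_{i+1}-1$ when $\ell>i$). Hence the cell attached to $\lambda^{+}/\lambda$ is $(1,\lambda_{\ell}+1)$, not $(1,\lambda_1+1)$, and your claim $h(1,\lambda_1+1)=n$ can fail even under the hypotheses of the proposition: take $k=2$, $m=1$, $n=4$, $\lambda=(1)\in\G(2,1)$, $\lambda^{+}=(3,2)$; then $\lambda^{+}/\lambda$ is a rim hook of size $4$ corresponding to the cell $(1,1)$ (hook length $4$), whereas your cell $(1,\lambda_1+1)=(1,2)$ has hook length $3$. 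The good news is that the error is local: all your argument actually uses is that each of $\lambda^{+}/\lambda$ and $\mu^{+}/\mu$, having top row $1$, corresponds to \emph{some} cell $(1,j)$ with $h(1,j)=n$; your monotonicity step shows there is at most one such cell, so both strips are the image of the same cell under the bijection, hence coincide, and $\mu=\lambda$. After replacing the misidentified cell accordingly (and dropping the unused bounds $\lambda_1+1\le k$, $j'\le k$ and the final ``forces $\lambda_1+1=j'$'' step), the proof is correct and matches the paper's in substance.
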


\begin{proof}  If $\lambda^{+}_1\ge k$, we observe that there is a unique way to remove a rim hook of size $n$ from $\lambda^{+}$ such that the first part of resulting partition $\lambda$ is less than $k$. Specifically, $\lambda_j=\lambda^{+}_{j+1}-1$ for $1\le j\le \ell-1$, $\lambda_{\ell}=\lambda^{+}_1-n+\ell-1$, and $\lambda_j=\lambda^{+}_j$ for $j\ge \ell+1$, where $\ell$ is the number of rows of $[\lambda^{+}/\lambda]$. Hence $\mu=\lambda$.
\end{proof}

The following result shows the relations between the sequences $\gamma\big({\lambda^{+}}^{\trans}\big)$ and $\gamma\big(\lambda^{\trans}\big)$. 

\begin{lem} \label{lem:cyclic-shift}
For any $\lambda\in\G(k,m)$, let $\beta(\lambda^{\trans})=(\beta_1,\dots,\beta_{k-1})$ and let $\gamma(\lambda^{\trans})=(\gamma_1,\dots,\gamma_{k-1})$. Let $\lambda^{+}$ be a partition such that $\lambda^{+}/\lambda$ is a rim hook of size $n$, $\lambda^{+}_1<k$ and the diagram $[\lambda^{+}/\lambda]$ has $a$ columns. Then for some $j\in\{1,...,k-a\}$ we have
\begin{equation} \label{eqn:beta-lambda+c}
\beta\big({\lambda^{+}}^{\trans}\big)=(\beta_1,\dots,\beta_{j-1},\beta_{j+a-1}+n,\beta_{j},\dots,\beta_{j+a-2},\beta_{j+a},\dots,\beta_{k-1})
\end{equation}
and
\begin{equation} \label{eqn:gamma-lambda-star}
\gamma\big({\lambda^{+}}^{\trans}\big) =
\begin{pmatrix}
\gamma_j &  \gamma_{j+1} & \gamma_{j+2} & \cdots & \gamma_{j+a-1} \\
\gamma_* & \gamma_{j} & \gamma_{j+1} & \cdots  & \gamma_{j+a-2}
\end{pmatrix}
\gamma\big(\lambda^{\trans}\big),
\end{equation}
where $\gamma_*=\overline{\gamma_{j+a-1}+n}\pmod k$. In particular, $\gamma_*=\gamma_{j+a-1}$ if $k\mid n$.
\end{lem}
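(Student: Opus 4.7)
The strategy is to mimic the proof of Lemma \ref{lem:non-inversion} in reverse: instead of removing a rim hook of size $k$, we are adding a rim hook of size $n$. Once one passes to transposes, the bookkeeping is almost identical, and only one size-dependent step changes.

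First I would identify the transposed rim hook ${\lambda^{+}}^{\trans}/\lambda^{\trans}$, which has $a$ rows; say these are rows $j, j+1,\dots,j+a-1$ of $\big[{\lambda^{+}}^{\trans}\big]$. The requirement that this skew shape is a connected rim hook (no $2\times 2$ block) forces consecutive rows to share exactly one column, which yields ${\lambda^{+}}^{\trans}_i = \lambda^{\trans}_{i-1}+1$ for $j+1 \le i \le j+a-1$, while the entries outside the range $[j,j+a-1]$ are unchanged. The remaining entry ${\lambda^{+}}^{\trans}_j$ is then pinned down by $\big|{\lambda^{+}}^{\trans}\big|-|\lambda^{\trans}|=n$; a short telescoping calculation yields
\[
{\lambda^{+}}^{\trans}_j = \lambda^{\trans}_{j+a-1} + n - a + 1.
\]
This is the direct analogue of the identity $\mu^{\trans}_{i+b-1}=\lambda^{\trans}_i-k+b-1$ used in Lemma \ref{lem:non-inversion}, with $k$ replaced by $n$.

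Next I would substitute these relations into $\beta_i = \lambda^{\trans}_i - i$ and its counterpart $\beta'_i = {\lambda^{+}}^{\trans}_i - i$. One checks immediately that $\beta'_i=\beta_i$ for $i<j$ or $i\ge j+a$, that $\beta'_i = \beta_{i-1}$ for $j+1\le i\le j+a-1$, and that
\[
\beta'_j = \big(\lambda^{\trans}_{j+a-1}-(j+a-1)\big)+n = \beta_{j+a-1}+n,
\]
which is exactly (\ref{eqn:beta-lambda+c}). Reducing each $\beta'_i$ modulo $k$ then gives (\ref{eqn:gamma-lambda-star}): position $j$ receives $\overline{\beta_{j+a-1}+n}=\overline{\gamma_{j+a-1}+n}=\gamma_*$, while positions $j+1,\dots,j+a-1$ pick up $\gamma_j,\dots,\gamma_{j+a-2}$.

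For the final clause, when $k\mid n$ we have $\overline{\gamma_{j+a-1}+n}\equiv \gamma_{j+a-1}\pmod k$, and the normalization $1\le \gamma_{j+a-1}\le k$ forces $\gamma_* = \gamma_{j+a-1}$. There is no real obstacle; the whole proof is bookkeeping once one sets up the transposed rim hook correctly, and the only step that needs more than a glance is the size constraint used to determine ${\lambda^{+}}^{\trans}_j$.
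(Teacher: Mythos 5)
Your argument is correct and is essentially the same as the paper's proof: you pass to the transposed rim hook with $a$ rows, use the rim-hook condition to get ${\lambda^{+}}^{\trans}_i=\lambda^{\trans}_{i-1}+1$ in the middle rows, determine ${\lambda^{+}}^{\trans}_j=\lambda^{\trans}_{j+a-1}+n-a+1$ from the size constraint, and then read off $\beta\big({\lambda^{+}}^{\trans}\big)$ and $\gamma\big({\lambda^{+}}^{\trans}\big)$ exactly as in the paper, including the observation that $k\mid n$ forces $\gamma_*=\gamma_{j+a-1}$. No gaps.
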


\begin{proof} Let $\xi$ denote the rim hook $\lambda^{+}/\lambda$. Note that $[\xi^{\trans}]$ has $a$ rows. Suppose $[\xi^{\trans}]$ meets $[{\lambda^{+}}^{\trans}]$ from row $j$ to row $j+a-1$.
Let $\beta(\lambda^{\trans})=(\beta_1,\dots,\beta_{k-1})$. We observe that
\begin{equation} \label{eqn:lambda-plus-transpose}
{\lambda^{+}}^{\trans}_i=
\begin{cases}
\lambda^{\trans}_i &  1\le i\le j-1\\[1ex]
\lambda^{\trans}_{i-1}+1 &  j+1\le i\le j+a-1\\[1ex]
\lambda^{\trans}_i & j+a\le i\le k-1.
\end{cases}
\end{equation}
Since $|\lambda^{+}|=|\lambda|+n$, we have
\begin{equation} \label{eqn:lambda-plus_j}
{\lambda^{+}}^{\trans}_{j}=\lambda^{\trans}_{j+a-1}+n-a+1.
\end{equation}
Let $\beta\big({\lambda^{+}}^{\trans}\big)=\big(\beta'_1,\dots,\beta'_{k-1}\big)$. By (\ref{eqn:beta-gamma}), (\ref{eqn:lambda-plus-transpose}) and (\ref{eqn:lambda-plus_j}), we have
\begin{equation} \label{eqn:beta-lambda+c-vector}
\beta'_i=
\begin{cases}
\beta_i &  1\le i\le j-1\\
\beta_{j+a-1}+n &  i=j \\
\beta_{i-1} & j+1\le i\le i+a-1\\
\beta_i & i+a\le j\le k-1.
\end{cases}
\end{equation}
Thus (\ref{eqn:beta-lambda+c}) holds. Creating $\gamma\big({\lambda^{+}}^{\trans}\big)$ from $\beta\big({\lambda^{+}}^{\trans}\big)$, we have
\[
\gamma\big({\lambda^{+}}^{\trans}\big)=(\gamma_1,\dots,\gamma_{j-1},\gamma_*,\gamma_{j},\dots,\gamma_{j+a-2},\gamma_{j+a},\dots,\gamma_{k-1}), 
\]
where $\gamma_*=\overline{\gamma_{j+a-1}+n}\pmod k$.  The result follows.
\end{proof}

For $\lambda,\mu\in\G(k,m)$ and partitions $\lambda^{+}, \mu^{+}$ such that $\lambda^{+}/\lambda$ and $\mu^{+}/\mu$ are rim hooks of size $n$, the following result shows that if $\mu\neq\lambda$ and $\mu^{+}=\lambda^{+}$, the occurrences of $s_{\lambda^{+}}, s_{\mu^{+}}$ in (\ref{eqn:Schur-expansion}) have the same sign if $k$ divides $n$, and the opposite signs otherwise.

\begin{pro} \label{pro:opposite-sign} Let $k\ge 3$. For any $\lambda,\mu\in\G(k,m)$, let $\lambda^{+},\mu^{+}$ be partitions such that $\lambda^{+}/\lambda$ and $\mu^{+}/\mu$ are rim hooks of size $n$ with $\mu^{+}=\lambda^{+}$ and $\lambda^{+}_1<k$. Then either $\mu=\lambda$, or $\mu\neq\lambda$ and the following relation holds:
\[
\frac{(-1)^{\height(\lambda^{+}/\lambda)}\pet_k(\lambda)}{(-1)^{\height(\mu^{+}/\mu)}\pet_k(\mu)} =
\begin{cases}
1  &  \mbox{if $k\mid n$}\\
-1 &  \mbox{if $k\nmid n$.}\\
\end{cases}
\]
\end{pro}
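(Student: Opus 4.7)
The plan is to reduce the desired sign ratio to a single parity identity and then verify it in two cases. Using Theorem~\ref{thm:Grinberg} together with the observations $\sum_i\beta_i=|\lambda|-\binom{k}{2}$ and, when $\gamma(\lambda^\trans)$ has distinct entries, $\sum_i\gamma_i=\binom{k+1}{2}-(k-d)$ with $d=|\lambda|\bmod k$, Grinberg's formula simplifies to $\pet_k(\lambda)=(-1)^{qk+\ninv(\gamma(\lambda^\trans))}$ where $q=\lfloor|\lambda|/k\rfloor$. Since $|\lambda|=|\mu|=m$, the prefactor $(-1)^{qk}$ is common to $\lambda$ and $\mu$; moreover, Lemma~\ref{lem:cyclic-shift} furnishes parameters $(j_\lambda,a_\lambda)$ and $(j_\mu,a_\mu)$ with $j_\lambda\neq j_\mu$ (since $\lambda\neq\mu$) and yields $\height(\lambda^+/\lambda)=n-a_\lambda$ and $\height(\lambda^+/\mu)=n-a_\mu$. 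After canceling the common factors, the desired ratio becomes $(-1)^{\ninv(\gamma(\lambda^\trans))+\ninv(\gamma(\mu^\trans))+a_\lambda+a_\mu}$, so the proposition is equivalent to the parity identity
\[
\ninv(\gamma(\lambda^\trans))+\ninv(\gamma(\mu^\trans))+a_\lambda+a_\mu\equiv\chi(k\nmid n)\pmod{2}.
\]

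For the case $k\mid n$, Lemma~\ref{lem:cyclic-shift} specializes to $\gamma_*=\gamma_{j+a-1}$, so $\gamma(\lambda^\trans)$ is obtained from $\gamma':=\gamma({\lambda^+}^\trans)$ by a pure cyclic shift on positions $[j_\lambda,j_\lambda+a_\lambda-1]$; the hypothesis $\pet_k(\lambda)\neq 0$ then forces $\gamma'$ to have distinct entries, and the cyclic shift is a product of $a_\lambda-1$ adjacent transpositions of unequal values, so $\ninv(\gamma(\lambda^\trans))\equiv\ninv(\gamma')+a_\lambda-1\pmod 2$. Summing with the analogous identity for $\mu$ shows the left-hand side of the parity identity is even, giving ratio $+1$.

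For the case $k\nmid n$, $V:=\overline{\gamma'_j-n}\pmod{k}\neq\gamma'_j$, so the multiset of $\gamma$-entries strictly changes under the transition. I would first observe that, for both $\gamma(\lambda^\trans)$ and $\gamma(\mu^\trans)$ to have distinct entries while $\lambda\neq\mu$, the sequence $\gamma'$ must possess \emph{exactly} one pair of equal entries, at positions $i_1<i_2$ with common value $v$, and $V$ must equal one of the two missing values of $\gamma'$; the only valid choices are then $\{j_\lambda,j_\mu\}=\{i_1,i_2\}$. Then decompose each transition $\gamma'\to\gamma^{(j)}$ as a cyclic shift on $[j,j+a_j-1]$ followed by a single-position replacement $v\mapsto V$ at position $p:=j+a_j-1$. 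The shift contributes parity $(a_j-1)-\chi(i_2\in[j+1,j+a_j-1])\pmod 2$ (subtracting adjacent swaps of two equal $v$'s that do not toggle $\ninv$), and the replacement contributes parity equal to the count of $v$'s at positions less than $p$, plus the count of $V$'s at positions greater than $p$, plus the count of values strictly between $v$ and $V$ at positions $\neq p$. The last quantity is independent of $j$, the $V$-count is zero (since $V\notin\gamma'$), and the $v$-count is tracked by whether the other copy of $v$ sits before or after $p$. Summing over $j\in\{i_1,i_2\}$ yields total parity $1$.

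The main obstacle will be the replacement-step bookkeeping in the $k\nmid n$ case, which splits according to whether $\beta'_{i_1}-n$ lands above or below $\beta'_{i_2}$ in the sorted $\beta$-sequence, equivalently whether the cyclic-shift range $[i_1,i_1+a_1-1]$ for $j=i_1$ captures the index $i_2$. The plan is to isolate, as an intermediate lemma, the identity
\[
\Delta_B(i_1)+\Delta_B(i_2)\equiv 1+\chi(i_2\in[i_1+1,i_1+a_1-1])\pmod 2
\]
for the replacement contributions, so that, combined with the shift contribution, the desired total parity $1$ follows uniformly in both sub-cases.
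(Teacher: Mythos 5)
Your reduction is sound: using Theorem~\ref{thm:Grinberg}, the equality of the $\beta$-sums and $\gamma$-sums for $\lambda$ and $\mu$, and $\height(\lambda^{+}/\lambda)=n-a_\lambda$, the proposition is indeed equivalent to the parity identity $\ninv\big(\gamma(\lambda^{\trans})\big)+\ninv\big(\gamma(\mu^{\trans})\big)+a_\lambda+a_\mu\equiv\chi(k\nmid n)\pmod 2$, which is exactly what the paper computes. Your case $k\mid n$ is complete and matches the paper's argument, and your structural observations for $k\nmid n$ (if $\mu\neq\lambda$ then $\gamma'=\gamma\big({\lambda^{+}}^{\trans}\big)$ has exactly one repeated value $v$, the replacement value $V=\overline{v-n}$ is a missing value, and the two removal positions are the two copies of $v$) are correct and correspond to the paper's Case~1/Case~2 split, though you state them as observations rather than prove them.

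The genuine gap is that the case $k\nmid n$ is not proved: everything hinges on your deferred ``intermediate lemma'' $\Delta_B(i_1)+\Delta_B(i_2)\equiv 1+\chi(i_2\in[i_1+1,i_1+a_1-1])$, which you yourself flag as the main obstacle, and the replacement-parity formula you propose to prove it with is incorrect in one of the two orderings of $v$ and $V$: replacing $v$ by $V$ at position $p$ flips the pair formed with the other copy of $v$ when that copy lies \emph{before} $p$ if $v<V$, but \emph{after} $p$ if $V<v$. Concretely, in Example~\ref{exa:vanish}(ii) ($k=5$, $n=3$, $\gamma'=(4,5,4,3)$, $v=4$, $V=1$, $i_1=1$, trivial shift), the replacement parity is $0$ (the non-inversion number goes from $1$ to $3$), whereas your formula gives $1$. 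So the lemma itself appears true (it is forced by the correct conclusion together with your shift computation), but establishing it requires redoing the positional bookkeeping with a case split on $v\lessgtr V$ interacting with the $\chi$-term, i.e.\ precisely the work you have postponed. The paper avoids all of this by a value-level decomposition: the composite $\gamma(\lambda^{\trans})\to\gamma\big({\lambda^{+}}^{\trans}\big)\to\gamma(\mu^{\trans})$ is rewritten as a pure $a$-cycle (\ref{eqn:gamma-lambda+c-case2}) followed by a pure $b$-cycle and a single transposition of the two distinct values $t$ and $\gamma_{j+a-1}$ (\ref{eqn:interchange}); since these act on sequences with distinct entries, the parity is immediately $(a-1)+(b-1)+1$. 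Either finish your positional lemma with the corrected orientation rule, or adopt that decomposition, which closes the case in a few lines.
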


\begin{proof} Let $d$ be the remainder of $m$ divided by $k$. Let $\beta(\lambda^{\trans})=(\beta_1,\dots,\beta_{k-1})$, and let $\gamma(\lambda^{\trans})=(\gamma_1,\dots,\gamma_{k-1})$. Both of the underlying sets of $\gamma(\lambda^{\trans})$ and $\gamma(\mu^{\trans})$ are $\{1,\dots,k\}\setminus\{k-d\}$ since $\lambda, \mu\in\G(k,m)$.
Suppose $[\lambda^{+}/\lambda]$ ($[\mu^{+}/\mu]$, respectively) has $a$ ($b$, respectively) columns.
By Lemma \ref{lem:cyclic-shift}, for some $j\in\{1,\dots,k-a\}$ we have
\begin{equation} \label{eqn:gamma-lambda+c}
\gamma\big({\lambda^{+}}^{\trans}\big)=
\begin{pmatrix}
\gamma_j &  \gamma_{j+1} &  \gamma_{j+2}  & \cdots & \gamma_{j+a-1} \\
\gamma_* & \gamma_{j}  & \gamma_{j+1} & \cdots  & \gamma_{j+a-2}
\end{pmatrix}
\gamma\big(\lambda^{\trans}\big)
\end{equation}
for some $\gamma_*\in\{1,\dots,k\}$. Let $\gamma\big({\lambda^{+}}^{\trans}\big)=(\gamma'_1,\dots,\gamma'_{k-1})$ be the resulting sequence. Since $\mu^{+}=\lambda^{+}$, the partition $\mu$ is obtained from $\lambda^{+}$ by removing a rim hook of size $n$. By Lemma \ref{lem:non-inversion}, the sum of the elements of $\beta\big(\mu^{\trans}\big)$ equals $\beta_1+\cdots+\beta_{k-1}$, and the sequence $\gamma\big(\mu^{\trans}\big)$ is obtained by acting a $b$-cycle on $\gamma\big({\lambda^{+}}^{\trans}\big)$ possibly with replacement.

(i) If $k\mid n$ then $\gamma_*=\gamma_{j+a-1}$. For some $i\in\{1,\dots,k-b\}$, we have
\begin{equation} \label{eqn:gamma-mu_c}
\gamma\big(\mu^{\trans}\big) =
\begin{pmatrix}
\gamma'_i  & \gamma'_{i+1} & \cdots  & \gamma'_{i+b-2} & \gamma'_{i+b-1} \\
\gamma'_{i+1}  & \gamma'_{i+2} &\cdots & \gamma'_{i+b-1}  & \gamma'_i
\end{pmatrix}
\gamma\big({\lambda^{+}}^{\trans}\big). 
\end{equation} 
By the two cycles in (\ref{eqn:gamma-lambda+c}) and (\ref{eqn:gamma-mu_c}), we have
\[
\ninv\big(\gamma(\lambda^{\trans})\big)+\ninv\big(\gamma(\mu^{\trans})\big)\equiv (a-1)+(b-1)\pmod 2.
\]
Using the facts $\height(\lambda^{+}/\lambda)=n-a$ and $\height(\mu^{+}/\mu)=n-b$, we have
\[
\frac{(-1)^{\height(\lambda^{+}/\lambda)}\pet_k(\lambda)}{(-1)^{\height(\mu^{+}/\mu)}\pet_k(\mu)} =\frac{(-1)^{n-a+\beta_1+\cdots+\beta_{k-1}+\ninv(\gamma(\lambda^{\trans}))+\gamma_1+\cdots+\gamma_{k-1}}}{(-1)^{n-b+\beta_1+\cdots+\beta_{k-1}+\ninv(\gamma(\mu^{\trans}))+\gamma_1+\cdots+\gamma_{k-1}}}=1.
\]

(ii) If $k\nmid n$ then $\gamma_*\in\{1,\dots,k\}\setminus\{\gamma_{j+a-1}\}$. There are two cases.

Case 1. If $\gamma_*=k-d$ then $\gamma\big({\lambda^{+}}^{\trans}\big)$ consists of the $k-1$ elements $\{1,\dots,k\}\setminus\{\gamma_{j+a-1}\}$. Since the underlying set of $\gamma\big(\mu^{\trans}\big)$ is $\{1,\dots,k\}\setminus\{k-d\}$, we observe that this is accomplished by the inverse operation of (\ref{eqn:gamma-lambda+c}), which substitutes $\gamma_{j+a-1}$ back for the element $k-d$ in $\gamma\big({\lambda^{+}}^{\trans}\big)$. Hence $\mu=\lambda$.

Case 2. If $\gamma_*\neq k-d$ then $\gamma_*\in\{\gamma_1,\dots,\gamma_{k-1}\}\setminus\{\gamma_{j+a-1}\}$, say $\gamma_*=t$.  By (\ref{eqn:gamma-lambda+c}), note that $\gamma'_j$ is the newly added $t$ in $\gamma\big({\lambda^{+}}^{\trans}\big)$. Let $\gamma'_i$ be the original $t$. If $\mu\neq\lambda$, we observe that the requested operation on $\gamma\big({\lambda^{+}}^{\trans}\big)$ substitutes $\gamma_{j+a-1}$ for the element $\gamma'_i$. Therefore, we have
\begin{equation} \label{eqn:gamma-mu_c-case2} 
\gamma\big(\mu^{\trans}\big) =
\begin{pmatrix}
\gamma'_i &  \gamma'_{i+1} & \cdots  &\gamma'_{i+b-2} & \gamma'_{i+b-1} \\
\gamma'_{i+1} & \gamma'_{i+2} & \cdots & \gamma'_{i+b-1}  & \gamma_{j+a-1}
\end{pmatrix}
\gamma\big({\lambda^{+}}^{\trans}\big).
\end{equation} 
In this case, the result of (\ref{eqn:gamma-lambda+c}) and (\ref{eqn:gamma-mu_c-case2}) is equivalent to the composition of the following cycles. Let $(\gamma''_1,\dots,\gamma''_{k-1})$ be the sequence resulting from 
\begin{equation} \label{eqn:gamma-lambda+c-case2}
\begin{pmatrix}
\gamma_j &  \gamma_{j+1} &  \gamma_{j+2}  & \cdots & \gamma_{j+a-1} \\
\gamma_{j+a-1} & \gamma_{j}  & \gamma_{j+1} & \cdots  & \gamma_{j+a-2}
\end{pmatrix}
\gamma\big(\lambda^{\trans}\big),
\end{equation}
where $\gamma''_j=\gamma_{j+a-1}$ and $\gamma''_i=t$. Let 
\begin{equation} \label{eqn:interchange}
\gamma\big(\mu^{\trans}\big) =\begin{pmatrix}
t &  \gamma_{j+a-1} \\
\gamma_{j+a-1} & t 
\end{pmatrix}
\begin{pmatrix}
\gamma''_i  & \gamma''_{i+1} & \cdots  &\gamma''_{i+b-2} & \gamma''_{i+b-1} \\
\gamma''_{i+1}  & \gamma''_{i+2} &\cdots & \gamma''_{i+b-1}  & \gamma''_i
\end{pmatrix}
(\gamma''_1,\dots,\gamma''_{k-1}).
\end{equation}
By the cycles in (\ref{eqn:gamma-lambda+c-case2}) and (\ref{eqn:interchange}), we have
\[
\ninv\big(\gamma(\lambda^{\trans})\big)+\ninv\big(\gamma(\mu^{\trans})\big)\equiv (a-1)+(b-1)+1\pmod 2.
\]
It follows that
\[
\frac{(-1)^{\height(\lambda^{+}/\lambda)}\pet_k(\lambda)}{(-1)^{\height(\mu^{+}/\mu)}\pet_k(\mu)} =\frac{(-1)^{n-a+\beta_1+\cdots+\beta_{k-1}+\ninv(\gamma(\lambda^{\trans}))+\gamma_1+\cdots+\gamma_{k-1}}}{(-1)^{n-b+\beta_1+\cdots+\beta_{k-1}+\ninv(\gamma(\mu^{\trans}))+\gamma_1+\cdots+\gamma_{k-1}}}=-1.
\]
Thus the proof is completed.
\end{proof}

\begin{exa} \label{exa:vanish} {\rm
(i) Consider the expansion of $G(3,5)\cdot p_3$ in the Schur basis. Let $\lambda=(2,2,1)$ and $\mu=(2,1,1,1)$. The diagrams of the partitions $\mu^{+}=\lambda^{+}=(2,2,2,2)$ with the rim hooks $\lambda^{+}/\lambda$ and $\mu^{+}/\mu$ are shown on the left of Figure \ref{fig:vanish}. Note that $\gamma(\lambda^{\trans})=(2,3)$, $\gamma\big({\lambda^{+}}^{\trans}\big)=(3,2)$, and $\gamma(\mu^{\trans})=(3,2)$. Moreover, $\pet_3(\lambda)=1$, $\pet_3(\mu)=-1$, $(-1)^{\height(\lambda^{+}/\lambda)}=-1$ and $(-1)^{\height(\mu^{+}/\mu)}=1$. The coefficient of $s_{(2,2,2,2)}$ in the expansion is $-2$.

(ii) Consider the expansion of $G(5,8)\cdot p_3$ in the Schur basis. Let $\lambda=(4,4)$ and $\mu=(3,2,1,1,1)$. The diagrams of the partitions $\mu^{+}=\lambda^{+}=(4,4,1,1,1)$ with the rim hooks $\lambda^{+}/\lambda$ and $\mu^{+}/\mu$ are shown on the right of  Figure \ref{fig:vanish}. Note that $\gamma(\lambda^{\trans})=(1,5,4,3)$, $\gamma\big({\lambda^{+}}^{\trans}\big)=(4,5,4,3)$, and $\gamma(\mu^{\trans})=(4,5,3,1)$. Moreover, $\pet_5(\lambda)=\pet_5(\mu)=1$,  $(-1)^{\height(\lambda^{+}/\lambda)}=1$ and $(-1)^{\height(\mu^{+}/\mu)}=-1$. The term $s_{(4,4,1,1,1)}$ vanishes in the expansion.

}
\end{exa}

\begin{figure}[ht]
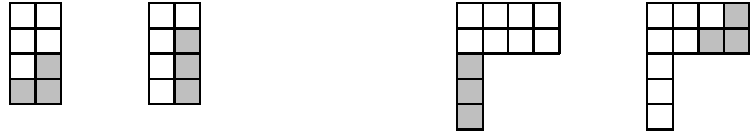

\begin{center}
\begin{tabular}{ccc}   
\ytableausetup{smalltableaux}
\ydiagram{2,2,2,2}*[*(lightgray)]{0,0,1+1,0+2}\hspace{0.4in}
\ydiagram{2,2,2,2}*[*(lightgray)]{0,1+1,1+1,1+1}    & \hspace{1in}  &
\ytableausetup{smalltableaux}
\ydiagram{4,4,1,1,1}*[*(lightgray)]{0,0,1,1,1}\hspace{0.4in}
\ydiagram{4,4,1,1,1}*[*(lightgray)]{3+1,2+2,0,0,0} \end{tabular}
\end{center}
\caption{\small On the left (right, respectively) are the partitions $\mu^{+}=\lambda^{+}=(2,2,2,2)$ ($(4,4,1,1,1)$, respectively) and the rim hooks $\lambda^{+}/\lambda$ and $\mu^{+}/\mu$ of size 3 in Example \ref{exa:vanish}.}
\label{fig:vanish}
\end{figure}

\smallskip
Now we prove Theorem \ref{thm:main-2}.

\medskip
\noindent
\emph{Proof of Theorem \ref{thm:main-2}.} We first show the sufficiency. Given a positive integer  $n$, let $k\ge 3$, $k\mid n$, and $m\ge n$. Using (\ref{eqn:Schur-expansion}),  we shall prove that the expansion of $G(k,m)\cdot p_n$ is not signed multiplicity free by constructing $\lambda,\mu\in\G(k,m)$, $\mu\neq\lambda$, and partitions $\lambda^{+},\mu^{+}$ such that $\lambda^{+}/\lambda$ and $\mu^{+}/\mu$ are rim hooks of size $n$ with $\mu^{+}=\lambda^{+}$. By Proposition \ref{pro:opposite-sign}, we have
\[
(-1)^{\height(\lambda^{+}/\lambda)}\pet_k(\lambda)=(-1)^{\height(\mu^{+}/\mu)}\pet_k(\mu).
\]
So, such occurrences of $s_{\lambda^{+}}$ and $s_{\mu^{+}}$ in (\ref{eqn:Schur-expansion}) always have the same sign.

Let $d$ be the remainder of $m$ divided by $k$, and let $r$ be the remainder of $m-d$ divided by $n$. Notice that $r$ is a multiple of $k$ since $k$ is a common divisor of $n$ and $m-d$. Let $q=1+\lfloor\frac{m-d}{n}\rfloor$. Note that $q\ge 2$. The partitions $\lambda^{+}, \lambda$ and $\mu$ are constructed in the following four cases.

(1) $d\neq 1$ and $q$ is even, say $q=2t$. Let $\lambda^{+}=(d,2^{n\cdot t},1^r)$ and $\lambda=(d,2^{n(t-1)},1^{n+r})$, where the first part $d$ is omitted if $d=0$. Note that the Young diagram $[\lambda]$ is obtained by removing $n$ cells from the second column of $[\lambda^{+}]$. The other partition is $\mu=(d,2^{n(t-1)+r+1},1^{n-r-2})$, where $[\mu]$ is obtained by removing $r+1$ ($n-r-1$, respectively) cells from the first (second, respectively) column of $[\lambda^{+}]$. Notice that $n-r\ge 3$ since $k\mid (n-r)$ and $k\ge 3$.

(2) $d=1$ and $q$ is even, say $q=2t$. Let $\lambda^{+}=(2^{n\cdot t},1^{r+1})$ and  $\lambda=(2^{n(t-1)},1^{n+r+1})$. The other partition is $\mu=(2^{n(t-1)+r+2},1^{n-r-3})$, where $[\mu]$ is obtained by removing $r+2$ ($n-r-2$, respectively) cells from the first (second, respectively) column of $[\lambda^{+}]$. 

(3) $d\neq 1$ and $q$ is odd, say $q=2t+1$. 

\begin{itemize}
\item $r=0$. Let $\lambda^{+}=(d,2^{n\cdot t},1^n)$, $\lambda=(d,2^{n(t-1)},1^{2n})$ and $\mu=(d,2^{n\cdot t})$.
\item $r\neq 0$. Let $\lambda^{+}=(d,2^{n\cdot t+r},1^{n-r})$ and $\lambda=(d,2^{n(t-1)+r},1^{2n-r})$. The other partition is $\mu=(d,2^{nt+1},1^{r-2})$, where $[\mu]$ is obtained by removing $n-r+1$ ($r-1$, respectively) cells from the first (second, respectively) column of $[\lambda^{+}]$. 
\end{itemize}

(4) $d=1$ and $q$ is odd, say $q=2t+1$. 
\begin{itemize}
\item $r=0$. Let $\lambda^{+}=(2^{n\cdot t},1^{n+1})$, $\lambda=(2^{n(t-1)},1^{2n+1})$ and $\mu=(2^{n\cdot t},1)$.
\item $r\neq 0$. Let $\lambda^{+}=(2^{nt+r},1^{n-r+1})$ and $\lambda=(2^{n(t-1)+r},1^{2n-r+1})$. The other partition is $\mu=(2^{nt+2},1^{r-3})$, where $[\mu]$ is obtained by removing $n-r+2$ ($r-2$, respectively) cells from the first (second, respectively) column of $[\lambda^{+}]$. 
\end{itemize}

The necessity is proved as follows. For any $\lambda,\mu\in\G(k,m)$, let $\lambda^{+},\mu^{+}$ be partitions such that $\lambda^{+}/\lambda$ and $\mu^{+}/\mu$ are rim hooks of size $n$ with $\mu^{+}=\lambda^{+}$. We shall prove that $s_{\lambda^{+}}$ either occurs exactly once or vanishes in (\ref{eqn:Schur-expansion}) under the following conditions on $k$ and $m$.

(i) $k\le 2$. Notice that $G(1,m)=s_{(0)}$ and $G(2,m)=s_{(1^m)}$. By the  rule (\ref{eqn:MN-rule}), the expansion of $G(k,m)\cdot p_n$ in the Schur basis is signed multiplicity free for all $n\ge 1$.

(ii) $k\ge 3$, $k\mid n$, and $m<n$. 
If $\mu\neq\lambda$ then there are two distinct cells $c,c'$ in the Young diagram $[\lambda^{+}]$ whose rim hooks are $\lambda^{+}/\lambda$ and $\mu^{+}/\mu$, respectively (e.g. see Figure \ref{fig:hook-length}). Since the hook length of $c,c'$ is $n$, we have $|\lambda^{+}|\ge 2n$, which is against $m<n$. Hence $\mu=\lambda$.

(iii) $k\ge 3$ and $k\nmid n$. By Proposition \ref{pro:k=2}, if $\lambda^{+}_1\ge k$ then $\mu=\lambda$. Let $\lambda^{+}_1< k$. By Proposition \ref{pro:opposite-sign}, either $\mu=\lambda$, or $\mu\neq\lambda$ and
\[
 (-1)^{\height(\lambda^{+}/\lambda)}\pet_k(\lambda)+(-1)^{\height(\mu^{+}/\mu)}\pet_k(\mu)=0.
\]
Hence $s_{\lambda^{+}}$ either occurs once or vanishes.  The proof of Theorem \ref{thm:main-2} is completed.
\qed

\begin{figure}[ht]
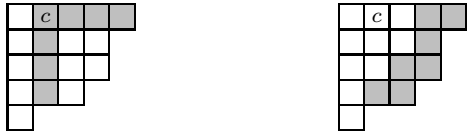

\begin{center}

\ytableaushort
{\none {c}, \none, \none, \none, \none}
* {5,4,4,3,1}
* [*(lightgray)]{1+4,1+1,1+1,1+1,1+0}
\hspace{1in}
\ytableaushort
{\none {c}, \none, \none, \none, \none}
* {5,4,4,3,1}
* [*(lightgray)]{3+2,3+1,2+2,1+2,1+0}

\end{center}
\caption{\small The hook and rim hook of a cell $c$ of a Young diagram.}
\label{fig:hook-length}
\end{figure}

\section{Concluding remarks}

Grinberg \cite[Corollary~2.18]{Grinberg} generalized Theorem \ref{thm:k-Petrie-Schur} to the product of  $G(k,m)$ with a Schur function $s_{\mu}$, and proved that the expansion of $G(k,m)\cdot s_{\mu}$ in the Schur basis is signed multiplicity free, i.e.,
\begin{equation} \label{eqn:generalized-k-Petrie}
G(k,m)\cdot s_{\mu} =\sum_{\lambda\,\vdash\, m+|\mu|}\pet_k(\lambda,\mu)s_{\lambda}, 
\end{equation}
where $
\pet_k(\lambda,\mu):=\det\big[\chi(0\le\lambda_i-\mu_j-i+j<k  )\big]_{i,j=1}^{\ell}$ (The number $\ell$ can be chosen so that $\ell(\lambda)\le \ell$ and $\ell(\mu)\le \ell$ by adding zero parts to $\lambda$ and $\mu$ \cite[Lemma~2.7]{Grinberg}). The $k$-Petrie number $\pet_k(\lambda)$ is simply $\pet_k(\lambda,\varnothing)$. We are interested in a combinatorial interpretation of the coefficients of the expansion (\ref{eqn:generalized-k-Petrie}). 

In the spirit of Jacobi--Trudi identity \cite[I(3.4)]{Mac}, Walker \cite{Walker} defined the \emph{modular Schur function} for a partition $\lambda$ by
\begin{equation} \label{eqn:modular-Schur-function}
G(k,\lambda):=\det\big[G(k,\lambda_i-i+j)\big]_{i,j=1}^{\ell(\lambda)}.
\end{equation}
Walker studied the transition matrix $A=(a_{\lambda,\mu})$, with rows and columns indexed by a set of partitions, defined by 
\[
G(k,\lambda)=\sum_{\mu} a_{\lambda,\mu} s_{\mu},
\]
and proved that the transition matrix $A$ is the direct sum of submatrices indexed by the $k$-cores of partitions $\lambda\vdash m$ \cite[Corollary~2.8]{Walker}. For example, in the case $k=3$ and $m=4$, the partitions $\lambda\vdash 4$ with $\core_3(\lambda)=(1)$ are $\{(4), (2,2), (1,1,1,1)\}$, and there are another two $3$-cores, $(3,1)$ and $(2,1,1)$. Indexed by the $3$-cores, the transition is expressed as follows:
\[
\begin{bmatrix}
G(3,(4)) \\
G(3,(2,2)) \\
G(3,(1,1,1,1)) \\
G(3,(3,1)) \\
G(3,(2,1,1))
\end{bmatrix}
=
\left[\begin{array}{ccc|c|c}
0  &  1  & -1 &  0  &  0 \\ 
1  &  0  &  1 &  0  &  0 \\
-1 &  1  &  0 &  0  &  0 \\ \hline
0  &  0  &  0 &  1  &  0 \\ \hline
0  &  0  &  0 &  0  &  1
\end{array}
\right]
\begin{bmatrix}
s_{(4)} \\
s_{(2,2)} \\
s_{(1,1,1,1)} \\
s_{(3,1)} \\
s_{(2,1,1)}
\end{bmatrix}.
\]
Our first main result gives a combinatorial interpretation for the entries in the first row of the transition matrix. We are interested in a combinatorial method to complete the transition matrix by expanding $G(k,\lambda)$ in the Schur basis combinatorially.

\end{document}